\documentclass[12pt]{article}
\usepackage{amsmath,amssymb,amsthm}
\usepackage{hyperref}
\setlength{\textwidth}{6.3in}
\setlength{\textheight}{8.7in}
\setlength{\topmargin}{0pt}
\setlength{\headsep}{0pt}
\setlength{\headheight}{0pt}
\setlength{\oddsidemargin}{0pt}
\setlength{\evensidemargin}{0pt}

\newcommand{\tmmathbf}[1]{\ensuremath{\boldsymbol{#1}}}

\newtheorem{theorem}{Theorem}[section]
\newtheorem{corollary}[theorem]{Corollary}
\newtheorem{lemma}[theorem]{Lemma}

\newtheorem{definition}[theorem]{Definition}

\numberwithin{equation}{section}

\usepackage{amssymb,latexsym,amsmath,epsfig,amssymb,amsthm, amsmath,color}
\def\V{\mathcal{V}}
\def\E{\mathcal{E}}
\def\A{\mathcal{A}}
\def\B{\mathcal{B}}
\begin{document}
\title{Distinct distances on regular varieties over finite fields}
\author{
    Pham Van Thang
    \and
    Do Duy Hieu
}
\date{}
\maketitle
\begin{abstract}
In this paper we study some generalized versions of a recent result due to Covert, Koh, and Pi (2015). More precisely, we prove that if a subset $\mathcal{E}$ in a \textit{regular} variety satisfies $|\mathcal{E}|\gg q^{\frac{d-1}{2}+\frac{1}{k-1}}$, then 
\[\Delta_{k, F}(\E):=\left\lbrace F(\mathbf{x}^1+\cdots+\mathbf{x}^k)\colon \mathbf{x}^i\in \E, 1\le i\le k\right\rbrace\supseteq \mathbb{F}_q\setminus\{0\},\]
for some certain families of polynomials $F(\mathbf{x})\in \mathbb{F}_q[x_1, \ldots, x_d]$.

\end{abstract}
\section{Introduction}
Let $\mathbb{F}_q$ be a finite field of order $q$, where $q$ is a prime power.  Let $D(\mathbf{x})=x_1^2+\cdots+x_d^2$ be a polynomial in $\mathbb{F}_q[x_1, \ldots, x_d]$. For $\E\subset\mathbb{F}_q^d$, we define the $D$-distance set of $\E$ to be
\[\Delta(\E)=\left\lbrace D(\mathbf{x}-\mathbf{y})\colon \mathbf{x}, \mathbf{y}\in \E\right\rbrace.\]

There are various papers studying the cardinality of $\Delta(\E)$, see for example \cite{bkt, ir, coko, copi, ko3} and references therein.  In this paper, we are interested in the case when $\E$ is a subset in a \textit{regular} variety. Let us first start with  a definition of regular varieties which is taken from \cite{copi}
\begin{definition}
For $\E\subseteq\mathbb{F}_q^d$,  let $\mathbf{1}_{\E}$ denote the characteristic function on $\E$. Let $F(\mathbf{x})\in \mathbb{F}_q[x_1, \ldots, x_d]$ be a polynomial. The variety $\mathcal{V}:=\{\mathbf{x}\in \mathbb{F}_q^d\colon F(\mathbf{x})=0\}$ is called a regular variety if $|\V| \asymp q^{d-1}$ and $\widehat{\mathbf{1}_\V(\mathbf{m})}\ll q^{-(d+1)/2}$ for all $\mathbf{m}\in \mathbb{F}_q^d\setminus \mathbf{0}$, where 
\[\widehat{\mathbf{1}_\V(\mathbf{m})}=\frac{1}{q^d}\sum_{\mathbf{x}\in \mathbb{F}_q^d}\chi(-\mathbf{m}\cdot\mathbf{x})\mathbf{1}_\V(\mathbf{x}).\]
\end{definition}
Here and throughout, $X \asymp Y$ means that there exist positive constants $C_1$ and $C_2$ such that $C_1Y< X <C_2Y$,  $X \ll Y$ means that there exists $C>0$ such that $X\le  CY$, and $X=o(Y)$ means that $X/Y\to 0$ as $q\to \infty$, where $X, Y$ are viewed as functions in $q$.

There are several examples of regular varieties as follows:
\begin{enumerate}
\item Spheres of nonzero radii:
\[S_j=\left\lbrace \mathbf{x}\in \mathbb{F}_q^d\colon ||\mathbf{x}||=j\right\rbrace, ~j\in \mathbb{F}_q^*:=\mathbb{F}_q\setminus \{0\} \quad  \cite{ir}\]
\item A paraboloid: 
\[P=\left\lbrace \mathbf{x}\in \mathbb{F}_q^d\colon x_1^2+\cdots+x_{d-1}^2=x_d\right\rbrace \quad \cite{tao}\]
\item Spheres defined by "Minkowski distance" with nonzero radii:
\[M_j=\left\lbrace \mathbf{x}\in \mathbb{F}_q^d\colon x_1\cdot x_2\cdots x_d=j\right\rbrace, ~~j\in \mathbb{F}_q^* \quad \cite{haha}.\]
\end{enumerate}
In 2007, Iosevich and Rudnev \cite{ir}, using Fourier analytic methods, made the first investigation on the distinct distance problem on the unit sphere in $\mathbb{F}_q^d$. More precisely, they proved the following.

\begin{theorem}[Iosevich et al., \cite{ir}]\label{018}
For $\E\subseteq S_1$ in $\mathbb{F}_q^d$ with $d\ge 3$. 
\begin{enumerate}
\item If $|\E|\ge Cq^{\frac{d}{2}}$ with a sufficiently large constant $C$, then there exists $c>0$ such that $|\Delta(\E)|\ge cq$.
\item If $d$ is even and $|\E|\ge Cq^{\frac{d}{2}}$ with a sufficiently large constant $C$, then $\Delta(\E)=\mathbb{F}_q$.
\item If $d$ is even, there exist $c>0$ and $\E\subset S_1$ such that $|\E|\ge cq^{\frac{d}{2}}$ and $\Delta(\E)\ne \mathbb{F}_q$.
\item If $d$ is odd and $|\E|\ge Cq^{\frac{d+1}{2}}$ with a sufficiently large constant $C>0$, then $\Delta(\E)=\mathbb{F}_q$. 
\item If $d$ is odd, there exist $c>0$ and $\E\subset S_1$ such that $|\E|\ge cq^{\frac{d+1}{2}}$ and $\Delta(\E)\ne \mathbb{F}_q$.
\end{enumerate}
\end{theorem}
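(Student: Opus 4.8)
The plan is to fix $t\in\mathbb{F}_q$ and study the pair-counting function
$$\nu(t)\assign\#\left\{(\mathbf{x},\mathbf{y})\in\E\times\E\colon D(\mathbf{x}-\mathbf{y})=t\right\},$$
so that $t\in\Delta(\E)$ precisely when $\nu(t)>0$. Writing $S_t=\{\mathbf{z}\colon D(\mathbf{z})=t\}$, expanding $\mathbf{1}_{S_t}(\mathbf{x}-\mathbf{y})$ in additive characters and applying Plancherel yields
$$\nu(t)=\frac{|\E|^2\,|S_t|}{q^d}+q^{2d}\sum_{\mathbf{m}\neq\mathbf{0}}\widehat{\mathbf{1}_{S_t}}(\mathbf{m})\,\big|\widehat{\mathbf{1}_{\E}}(\mathbf{m})\big|^2.$$
Because $S_1$ (and $S_t$ for $t\neq0$) is a regular variety, the first term is a genuine main term of order $|\E|^2/q$, and everything reduces to controlling the error $R(t)$. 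The essential input is the exact evaluation of $\widehat{\mathbf{1}_{S_t}}(\mathbf{m})$ by completing squares into a $d$-fold Gauss sum: for $\mathbf{m}\neq\mathbf{0}$ this gives $|\widehat{\mathbf{1}_{S_t}}(\mathbf{m})|\ll q^{-(d+1)/2}$, improving to $q^{-(d+2)/2}$ on the isotropic cone $D(\mathbf{m})=0$, and --- crucially for even $d$ --- the Gauss factor $G^{d}$ collapses to the real scalar $\pm q^{d/2}$, leaving a single Kloosterman sum in the remaining variable.

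For statement (1) I would use the second moment method. One has $\sum_t\nu(t)=|\E|^2$, while $\sum_t\nu(t)^2$ counts quadruples at equal distance and, by the same identity together with the decay bound, equals $|\E|^4/q+O(q^{d-1}|\E|^2)$; as soon as $|\E|\gg q^{d/2}$ the first term dominates and Cauchy--Schwarz $|\Delta(\E)|\ge(\sum_t\nu(t))^2/\sum_t\nu(t)^2$ forces $|\Delta(\E)|\gg q$ for every $d\ge3$. For statement (4) ($d$ odd) I would argue pointwise: bounding $|R(t)|$ by $\max_{\mathbf{m}\neq\mathbf{0}}|\widehat{\mathbf{1}_{S_t}}(\mathbf{m})|\cdot q^{2d}\sum_{\mathbf{m}}|\widehat{\mathbf{1}_{\E}}(\mathbf{m})|^2\ll q^{(d-1)/2}|\E|$ via Plancherel and comparing with the main term $\asymp|\E|^2/q$, one gets $\nu(t)>0$ for all $t$ once $|\E|\gg q^{(d+1)/2}$, i.e. $\Delta(\E)=\mathbb{F}_q$.

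Statement (2) is where even $d$ must genuinely be exploited, since the uniform bound above only reproduces the odd threshold $q^{(d+1)/2}$. Here I would use $\E\subset S_1$ to write $D(\mathbf{x}-\mathbf{y})=2-2\,\mathbf{x}\cdot\mathbf{y}$, so that covering all distances is equivalent to realizing every inner product; equivalently, one studies the distance-$t$ graph on $S_1$, whose adjacency operator is the restriction to $S_1$ of convolution by $\mathbf{1}_{S_t}$. The point is that for even $d$ the scalar nature of $G^{d}$ forces the nontrivial spectrum of this restricted operator to be governed by the better isotropic-cone rate $q^{-(d+2)/2}$ rather than the worst-case $q^{-(d+1)/2}$; the spectral gap improves to $q^{d/2-1}$, and the expander mixing lemma (equivalently, the above error analysis with the sharpened bound) yields a pair at distance $t$ for every $t$ already when $|\E|\gg q^{d/2}$. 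Establishing this $\sqrt{q}$ saving --- showing that the anisotropic Kloosterman modes do not feed the top eigenvalues once the operator is restricted to the sphere --- is the main obstacle, since a worst-case pointwise estimate provably cannot beat $q^{(d+1)/2}$.

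Finally, for the sharpness statements (3) and (5) I would match the thresholds by exhibiting large sets on which one value is omitted, i.e. independent sets in the distance-$t^\ast$ graph on $S_1$. The spectral gaps just described cap such independent sets at size $\asymp q^{d/2}$ (even) and $\asymp q^{(d+1)/2}$ (odd), and I would realize these extremes by an explicit algebraic family saturating the bound --- for instance a set aligned with a maximal totally isotropic subspace, or a section of $S_1$ on which the restricted distance form takes values in a single square class and hence misses the complementary class. Verifying both the size lower bound and the omission of a distance is the remaining work, and it is precisely the tightness of the mixing/eigenvalue bound that guarantees these constructions cannot be improved, so that the exponents $d/2$ and $(d+1)/2$ are sharp.
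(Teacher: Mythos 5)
A preliminary remark: the paper never proves Theorem~\ref{018}; it is imported from \cite{ir}, and the only hint given about its proof is the reduction $D(\mathbf{x}-\mathbf{y})=2-2\,\mathbf{x}\cdot\mathbf{y}$ for $\mathbf{x},\mathbf{y}\in S_1$. So your proposal has to stand on its own, and judged that way it covers parts (1) and (4) but not parts (2), (3), (5). Your Fourier identity for $\nu(t)$ is correct, and part (4) is fine: the pointwise bound $|R(t)|\ll q^{(d-1)/2}|\E|$ against the main term $\asymp|\E|^2/q$ gives every nonzero distance once $|\E|\gg q^{(d+1)/2}$, and $0$ is trivially a distance. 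For part (1), the second-moment bound $\sum_t\nu(t)^2=|\E|^4/q+O(q^{d-1}|\E|^2)$ is \emph{true} for $\E\subseteq S_1$, but not ``by the same identity together with the decay bound'': the pointwise decay $|\widehat{\mathbf{1}_{S_t}}(\mathbf{m})|\ll q^{-(d+1)/2}$ only yields $\sum_t R(t)^2\ll q\cdot\big(q^{(d-1)/2}|\E|\big)^2=q^{d}|\E|^2$, which reproduces the $(d+1)/2$ threshold and nothing better. The factor of $q$ is saved by using that $\E$ itself lies on the sphere: write $\sum_t\nu(t)^2-|\E|^4/q=\frac1q\sum_{s\neq0}\left|\sum_{\mathbf{x},\mathbf{y}\in\E}\chi(s\,\mathbf{x}\cdot\mathbf{y})\right|^2$, apply Cauchy--Schwarz in $\mathbf{x}$, and use that the dilation map $(s,\mathbf{x})\mapsto s\mathbf{x}$ is at most $2$-to-$1$ on $\mathbb{F}_q^{*}\times S_1$ so that Plancherel applies. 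This is fixable, but it is a different mechanism from the one you invoke.

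The genuine gap is part (2), and you flag it yourself: the whole even-dimensional improvement rests on the claim that the distance-$t$ graph on $S_1$ has second eigenvalue $O(q^{d/2-1})$, and nothing in your write-up establishes this. The heuristic offered --- that since $G^{d}$ is a real scalar for even $d$, the restricted operator's spectrum is ``governed by'' the isotropic-cone rate $q^{-(d+2)/2}$ --- is a restatement of the phenomenon, not an argument: the adjacency operator is $f\mapsto\big((f\mathbf{1}_{S_1})*\mathbf{1}_{S_t}\big)\big|_{S_1}$, and restriction to the sphere does not commute with the Fourier decomposition that diagonalizes convolution, so pointwise bounds on $\widehat{\mathbf{1}_{S_t}}$ (whether $q^{-(d+1)/2}$ off the cone or $q^{-(d+2)/2}$ on it) give no control of the restricted spectrum. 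Indeed every soft route (your $R(t)$ bound, Cauchy--Schwarz plus the $2$-to-$1$ trick, etc.) provably stalls at $q^{(d+1)/2}$; the even/odd dichotomy only emerges from an explicit evaluation of the eigenvalues of the $O_d(\mathbb{F}_q)$-invariant scheme on the quadric, where they appear as Kloosterman/Sali\'e-type sums of size $O(q^{(d-2)/2})$ in even dimension but genuinely of size $q^{(d-1)/2}$ in odd dimension --- precisely the kind of computation in Bannai--Shimabukuro--Tanaka \cite{bst} and Kwok \cite{kwok} that the present paper cites for Theorem~\ref{euclidean graphs}. Without that input, part (2) is unproved. Parts (3) and (5) contain a second, independent gap: your logic there is inverted, since Hoffman/mixing bounds cap independent sets from \emph{above} and can never ``guarantee'' that large extremal sets exist. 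Sharpness needs explicit constructions plus verification that a distance is omitted; for instance, when $S_1$ is a hyperbolic quadric ($d$ even, coordinates with $Q(\mathbf{u},\mathbf{v})=\mathbf{u}\cdot\mathbf{v}$), the set $\E=\{(\lambda\mathbf{u}_0,\mathbf{v})\colon\lambda\in\mathbb{F}_q^{*},\ \lambda\mathbf{u}_0\cdot\mathbf{v}=1\}$ has size $\asymp q^{d/2}$ and its nonzero ``distances'' are exactly the values $-(\lambda-\mu)^2/(\lambda\mu)$, which omit every $t$ with $t^2-4t$ a nonsquare. You name the right objects (isotropic structure, square classes) but supply neither the construction nor the verification, and an analogous (different) construction is needed in odd dimension.
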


Recently, Covert, Koh, and Pi \cite{copi} studied  a generalization of Theorem \ref{018}, namely they dealt with the  following question: How large does a subset $\E$ in a regular variety $\V$ need to be to make sure that  $\Delta_{k, D}(\E)=\mathbb{F}_q$ or $|\Delta_{k, D}(\E)|\gg q$, where 
\begin{equation}\label{0929}\Delta_{k, D}(\E):=\left\lbrace D(\mathbf{x}^1+\cdots+\mathbf{x}^k)\colon \mathbf{x}^i\in \E, 1\le i\le k\right\rbrace?\end{equation}

The main idea in the proof of Theorem \ref{018}  is to reduce the distance problem to the dot product problem since the distance between two points $\mathbf{x}$ and $\mathbf{y}$ in $S_1$ is $2-2\mathbf{x}\cdot \mathbf{y}$, where $\mathbf{x}\cdot \mathbf{y}=x_1y_1+\cdots +x_dy_d$. Therefore 
\begin{equation}\label{119}|\Delta(\E)|=|\Pi_{2}(\E)|:=\left\lbrace \mathbf{x}\cdot \mathbf{y}\colon \mathbf{x}, \mathbf{y}\in \E\right\rbrace.\end{equation}

For the case $k\ge 3$ and $\E\subset S_1$, one can check that 
\[|\Delta_{k, D}(\E)|=|\Pi_{k}(\E)|:=\left\vert \left\lbrace \sum_{i=1}^k\sum_{j=1}^ka_{ij}\cdot b_{ij}\cdot\mathbf{x}^i\cdot \mathbf{x}^j\colon \mathbf{x}^l\in \E, 1\le l\le k\right\rbrace\right\vert,\]
where $a_{ij}=1$ if $i<j$ and $0$ otherwise, and $b_{ij}=1$ for $i=1$ and $-1$ otherwise.  

However, it seems hard to get a good estimate on $|\Pi_{k}(\E)|$ when $k\ge 3$, and if the unit sphere $S_1$ is replaced by a general regular variety $\V$, there is no guarantee that the  equality (\ref{119}) will satisfy. Thus, in general, we can not apply the approach of the proof of Theorem \ref{018} to estimate the cardinality of $\Delta_{k, D}(\E)$.

Using a new approach with Fourier analytic techniques, Covert, Koh and Pi \cite{copi} established that the condition on the cardinality of $\E$ in Theorem \ref{018} can be improved to get $\Delta_{k, D}(\E)=\mathbb{F}_q$ with $k\ge 3$. The precise statement of their result is as follows.
\begin{theorem}[Covert et al., \cite{copi}]\label{028}
Suppose that $\V\subset \mathbb{F}_q^d$ is a regular variety, and assume that $k\ge 3$ is an integer and $\E\subseteq \V$. If $ q^{\frac{d-1}{2}+\frac{1}{k-1}}=o(|\E|)$, then  we have 
\[\Delta_{k, D}(\E)\supseteq \mathbb{F}_q^* ~~\mbox{for even $d\ge 2$},\]
and 
\[\Delta_{k, D}(\E)= \mathbb{F}_q ~~\mbox{for odd $d\ge 3$}.\]
\end{theorem}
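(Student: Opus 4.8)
The plan is to fix an arbitrary $t\in\mathbb{F}_q$ and to show that the number $N(t)$ of $k$-tuples $(\mathbf{x}^1,\dots,\mathbf{x}^k)\in\E^k$ with $D(\mathbf{x}^1+\cdots+\mathbf{x}^k)=t$ is strictly positive for every $t\neq 0$, and in addition for $t=0$ when $d$ is odd; this yields the two claimed inclusions simultaneously. Writing $S_t=\{\mathbf{y}\in\mathbb{F}_q^d:\|\mathbf{y}\|^2=t\}$ and $S(\mathbf{m})=\sum_{\mathbf{x}\in\E}\chi(\mathbf{m}\cdot\mathbf{x})$, I would detect the equation $\|\mathbf{x}^1+\cdots+\mathbf{x}^k\|^2=t$ by Fourier inversion of $\mathbf{1}_{S_t}$, which separates the $k$ copies and gives
\[
N(t)=\sum_{\mathbf{m}\in\mathbb{F}_q^d}\widehat{\mathbf{1}_{S_t}}(\mathbf{m})\,S(\mathbf{m})^{k}.
\]
The term $\mathbf{m}=\mathbf{0}$ is the main term $\widehat{\mathbf{1}_{S_t}}(\mathbf{0})\,|\E|^{k}=q^{-d}|S_t|\,|\E|^{k}\asymp |\E|^{k}/q$, using $|S_t|\asymp q^{d-1}$ for the relevant $t$; the goal is to dominate all remaining frequencies by this quantity.

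Second, I would make $\widehat{\mathbf{1}_{S_t}}(\mathbf{m})$ explicit. Completing the square in the Gauss-sum identity $\sum_{\mathbf{x}}\chi\!\left(s\|\mathbf{x}\|^2-\mathbf{m}\cdot\mathbf{x}\right)=\eta(s)^{d}G^{d}\,\chi\!\left(-\tfrac{\|\mathbf{m}\|^2}{4s}\right)$, where $\eta$ is the quadratic character and $G$ is the standard Gauss sum with $|G|=\sqrt q$, reduces $\widehat{\mathbf{1}_{S_t}}(\mathbf{m})$ to a one-dimensional exponential sum in $s$ that is a Kloosterman sum for even $d$ and a Salié sum for odd $d$. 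The Weil and Salié bounds then give $|\widehat{\mathbf{1}_{S_t}}(\mathbf{m})|\ll q^{-(d+1)/2}$ for all $\mathbf{m}\neq\mathbf{0}$, the isotropic frequencies $\|\mathbf{m}\|^2=0$ being treated separately. This is exactly the step where the parity of $d$ enters, through the factor $\eta(s)^{d}$: for $t=0$ and even $d$ the sphere $S_0$ has an anomalously large spectrum on the isotropic cone (of size $\sim q^{-d/2}$ on $\asymp q^{d-1}$ frequencies), whereas for odd $d$ the character sum $\sum_{s\neq0}\eta(s)$ vanishes, which is precisely why one obtains $\mathbb{F}_q$ for odd $d$ but only $\mathbb{F}_q^{*}$ for even $d$.

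Third, the error term is $\sum_{\mathbf{m}\neq\mathbf{0}}\widehat{\mathbf{1}_{S_t}}(\mathbf{m})S(\mathbf{m})^{k}$, so Step two reduces everything to the moment $\sum_{\mathbf{m}\neq\mathbf{0}}|S(\mathbf{m})|^{k}$: it suffices to prove
\[
\sum_{\mathbf{m}\neq\mathbf{0}}|S(\mathbf{m})|^{k}=o\!\left(q^{(d-1)/2}|\E|^{k}\right)\qquad\text{whenever } q^{\frac{d-1}{2}+\frac{1}{k-1}}=o(|\E|),
\]
since then the error is $o\!\left(q^{-(d+1)/2}\cdot q^{(d-1)/2}|\E|^{k}\right)=o(|\E|^{k}/q)$ and is swallowed by the main term. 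The two inputs are the exact Plancherel identity $\sum_{\mathbf{m}}|S(\mathbf{m})|^{2}=q^{d}|\E|$ coming from orthogonality, and the regularity of $\V$. Regularity is used to prevent $S$ from concentrating: a large value $|S(\mathbf{m})|$ forces $\E$ to cluster on an affine hyperplane normal to $\mathbf{m}$, and since $\E\subseteq\V$ and the Fourier decay of $\V$ bounds such slice sizes, the frequencies carrying large values of $|S(\mathbf{m})|$ are few and lie in low-dimensional configurations. I would quantify this with a dyadic level-set decomposition of $\{\mathbf{m}:|S(\mathbf{m})|\approx\lambda\}$, estimating the size of each level set using $\V$'s Fourier decay together with the $L^2$ identity and the trivial bound $|S(\mathbf{m})|\le|\E|$; optimising over the truncation height is the mechanism that produces the gain $\tfrac{1}{k-1}$ over the Iosevich–Rudnev exponent.

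The main obstacle is precisely this moment estimate. The naive route $\sum_{\mathbf{m}\neq\mathbf{0}}|S(\mathbf{m})|^{k}\le\big(\max_{\mathbf{m}\neq\mathbf{0}}|S(\mathbf{m})|\big)^{k-2}\sum_{\mathbf{m}}|S(\mathbf{m})|^{2}$ combined with the only available pointwise bound $\max|S|\le|\E|$ merely reproduces the threshold $q^{(d+1)/2}$ and discards the extra averaging available for $k\ge 3$. Moreover $\max_{\mathbf{m}\neq\mathbf{0}}|S(\mathbf{m})|$ can genuinely equal $|\E|$ (take $\E$ inside a single hyperplane section of $\V$), so no uniform $\ell^{\infty}$ improvement exists and the gain must be extracted from the distribution of the spectrum rather than from a pointwise bound. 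Turning the concentration heuristic into a rigorous level-set inequality—showing that the Fourier decay of $\V$ forces the large spectrum of $\E$ to be correspondingly sparse—is the crux; once it is established, Steps one through three assemble into $N(t)>0$ in the stated range, giving $\Delta_{k,D}(\E)\supseteq\mathbb{F}_q^{*}$ for even $d$ and $\Delta_{k,D}(\E)=\mathbb{F}_q$ for odd $d$.
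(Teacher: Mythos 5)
Your Steps 1--3 are correct and are indeed the skeleton of the Fourier-analytic argument of Covert--Koh--Pi (note that the paper you are being compared against does not reprove Theorem \ref{028} at all: it quotes it from \cite{copi} and instead proves the generalizations, Theorems \ref{thm1} and \ref{thm33}, by spectral graph theory). The inversion identity for $N(t)$, the main term $\asymp|\E|^{k}/q$, the decay $|\widehat{\mathbf{1}_{S_t}}(\mathbf{m})|\ll q^{-(d+1)/2}$ with the parity-of-$d$ discussion of $t=0$, and the reduction to the moment bound $\sum_{\mathbf{m}\neq\mathbf{0}}|S(\mathbf{m})|^{k}=o\bigl(q^{(d-1)/2}|\E|^{k}\bigr)$ are all sound. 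But your proposal stops precisely where the theorem's content lies: you state yourself that the moment estimate is ``the crux,'' and the mechanism you sketch for it --- a dyadic level-set decomposition in which large values of $|S(\mathbf{m})|$ are shown to be sparse because $\E$ would ``cluster on hyperplanes'' --- is not how any proof of this result works, and it is doubtful it can be made to work: the Fourier decay of $\V$ controls $\widehat{\mathbf{1}_\V}$, not the hyperplane slices of an arbitrary subset $\E\subseteq\V$, and no level-set sparsity beyond Plancherel is available in general.

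The missing idea is that the moments of $S$ \emph{are} additive energies, and that the energies satisfy a recursion exactly because $\E$ lies in a regular variety. For even $k$, orthogonality gives the identity $\sum_{\mathbf{m}\in\mathbb{F}_q^d}|S(\mathbf{m})|^{k}=q^{d}\Lambda_{k}(\E)$, where $\Lambda_k(\E)$ counts solutions of $\mathbf{x}^{1}+\cdots+\mathbf{x}^{k/2}=\mathbf{x}^{k/2+1}+\cdots+\mathbf{x}^{k}$ in $\E^{k}$; odd $k$ is handled by Cauchy--Schwarz via $\Lambda_{k-1}(\E)^{1/2}\Lambda_{k+1}(\E)^{1/2}$, a case distinction your write-up ignores entirely. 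Regularity then enters through the following bound (Lemma \ref{hai} of the paper, proved there by the multiplicity expander mixing lemma, Lemma \ref{edge-c-sesb}, applied to the Cayley digraph $C_\V$ of Theorem \ref{aaa}; in \cite{copi} the same inequality is obtained by Fourier analysis): since in any solution counted by $\Lambda_k(\E)$ the last point is determined by the other $k-1$ and must lie in $\V$, one gets
\begin{equation*}
\Lambda_k(\E)\ \ll\ \frac{|\E|^{k-1}}{q}+q^{(d-1)/2}\,\Lambda_{k-2}(\E)^{1/2}\Lambda_{k}(\E)^{1/2},
\qquad\text{hence}\qquad
\Lambda_k(\E)\ \ll\ q^{d-1}\Lambda_{k-2}(\E)+\frac{|\E|^{k-1}}{q},
\end{equation*}
and by induction $\Lambda_k(\E)\ll q^{(d-1)(k-2)/2}|\E|+|\E|^{k-1}/q$ once $|\E|>q^{(d-1)/2}$ (Theorem \ref{coco872016}). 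Substituting this into $q^{d}\Lambda_k(\E)$ gives your required bound $o\bigl(q^{(d-1)/2}|\E|^{k}\bigr)$ exactly under the hypothesis $q^{\frac{d-1}{2}+\frac{1}{k-1}}=o(|\E|)$ --- the exponent $\frac{1}{k-1}$ arises from balancing the two terms of the energy bound, not from ``optimising a truncation height.'' So the correct use of regularity is to close a recursion on energies, not to sparsify the large spectrum of $\E$; without this input your argument proves nothing beyond the $k=2$ threshold $q^{(d+1)/2}$ that you correctly identify as the output of the naive estimate.
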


It follows from Theorem \ref{018} that in order to get $\Delta_{2, D}(\E)=\mathbb{F}_q$, the sharp exponent of the sets $\E$ of $S_1$ must be $d/2$ for even $d\ge 4$, and $(d+1)/2$ for odd $d\ge 3$. Theorem \ref{028} implies that the exponent $d/2$ can be decreased to $\frac{d-1}{2}+\frac{1}{k-1}$ for $k\ge 3$ and any regular variety $\V\subseteq \mathbb{F}_q^d$.  

The main purpose of this note is to prove two generalizations of Theorem \ref{028} by employing techniques from spectral graph theory. Our first result is the following.
\begin{theorem}\label{thm1}
Let $Q$ be a non-degenerate quadratic form on $\mathbb{F}_q^d$. Suppose that $\V\subset \mathbb{F}_q^d$ is a regular variety, and assume that $k\ge 3$ is an integer and $\E\subseteq \V$. If $q^{\frac{d-1}{2}+\frac{1}{k-1}}=o(|\E|)$, then for any $t\in \mathbb{F}_q^*$ we have 

\[\left\vert \left\lbrace(\mathbf{x}_1, \ldots, \mathbf{x}_k)\in \E^k\colon Q(\mathbf{x}_1+\cdots+\mathbf{x}_k)=t\right\rbrace\right\vert =(1-o(1))\frac{|\E|^k}{q}.\]
\end{theorem}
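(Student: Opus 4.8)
The plan is to realize the quantity
$N_t := |\{(\mathbf{x}_1,\dots,\mathbf{x}_k)\in\E^k : Q(\mathbf{x}_1+\cdots+\mathbf{x}_k)=t\}|$
as a spectral sum over the level set $S_t := \{\mathbf{x}\in\mathbb{F}_q^d : Q(\mathbf{x})=t\}$ and to peel off the main term by Fourier inversion. Since the linear form $\mathbf{m}\cdot(\mathbf{x}_1+\cdots+\mathbf{x}_k)$ decouples the $k$ variables, expanding $\mathbf{1}_{S_t}$ gives
\[
N_t=\sum_{\mathbf{m}\in\mathbb{F}_q^d}\widehat{\mathbf{1}_{S_t}}(\mathbf{m})\,\mathcal{S}(\mathbf{m})^k,\qquad \mathcal{S}(\mathbf{m}):=\sum_{\mathbf{x}\in\E}\chi(\mathbf{m}\cdot\mathbf{x}),
\]
so that the $\mathbf{m}=\mathbf{0}$ term equals $q^{-d}|S_t|\,|\E|^k$. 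Because $Q$ is non-degenerate and $t\neq0$, a standard Gauss-sum computation shows that $S_t$ is itself a regular variety: $|S_t|=(1+o(1))q^{d-1}$ and $|\widehat{\mathbf{1}_{S_t}}(\mathbf{m})|\ll q^{-(d+1)/2}$ for $\mathbf{m}\neq\mathbf{0}$. Hence the $\mathbf{m}=\mathbf{0}$ term is exactly the claimed main term $(1-o(1))|\E|^k/q$, and the whole problem reduces to showing that the remaining sum is $o(|\E|^k/q)$.

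Using the eigenvalue bound on $\widehat{\mathbf{1}_{S_t}}$, the error is at most
\[
\Big|\sum_{\mathbf{m}\neq\mathbf{0}}\widehat{\mathbf{1}_{S_t}}(\mathbf{m})\,\mathcal{S}(\mathbf{m})^k\Big|\ll q^{-(d+1)/2}\sum_{\mathbf{m}\neq\mathbf{0}}|\mathcal{S}(\mathbf{m})|^{k},
\]
so it suffices to prove $\sum_{\mathbf{m}\neq\mathbf{0}}|\mathcal{S}(\mathbf{m})|^{k}=o\!\left(q^{(d-1)/2}|\E|^{k}\right)$. Here lies the main obstacle: $\E$ is an \emph{arbitrary} subset of $\V$, so one cannot control $\max_{\mathbf{m}\neq\mathbf{0}}|\mathcal{S}(\mathbf{m})|$ (it may be as large as $\asymp|\E|$ for structured $\E$), and the crude interpolation $\sum_{\mathbf{m}\neq\mathbf{0}}|\mathcal{S}(\mathbf{m})|^k\le(\max_{\mathbf{m}\neq\mathbf{0}}|\mathcal{S}(\mathbf{m})|)^{k-2}\sum_{\mathbf{m}}|\mathcal{S}(\mathbf{m})|^2$ only yields the far weaker threshold $|\E|\gg q^{(d+1)/2}$. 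The regularity of $\V$, which has not yet been used, must be brought in precisely to bound this high moment.

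My plan for this step is to interpret the moments spectrally and estimate them by iterating an expander-mixing bound. For even $k=2\ell$ one has $\sum_{\mathbf{m}}|\mathcal{S}(\mathbf{m})|^{2\ell}=q^{d}E_\ell(\E)$, where $E_\ell(\E)$ is the $\ell$-fold additive energy of $\E$ (the number of solutions of $\mathbf{a}_1+\cdots+\mathbf{a}_\ell=\mathbf{b}_1+\cdots+\mathbf{b}_\ell$ with all entries in $\E$); for odd $k$ a single Cauchy--Schwarz reduces $\sum|\mathcal{S}|^k$ to a geometric mean of two such energies. Equivalently, writing $w_{k-1}=\mathbf{1}_\E*\cdots*\mathbf{1}_\E$ for the $(k-1)$-fold convolution, the surviving count is a weighted edge-count between the distribution $w_{k-1}$ and $\E$ in the Cayley graph on $\mathbb{F}_q^d$ whose connection set is $S_t$, and the expander-mixing lemma bounds the error by $q^{(d-1)/2}\|w_{k-1}\|_2|\E|^{1/2}$. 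Thus everything is reduced to bounding $\|w_{k-1}\|_2^2=E_{k-1}(\E)$, and I would do this by repeatedly convolving with $\mathbf{1}_\E$ and applying the Fourier flatness $|\widehat{\mathbf{1}_\V}(\mathbf{m})|\ll q^{-(d+1)/2}$ of the ambient regular variety, each of the $k-1$ passes contributing one eigenvalue factor of size $q^{(d-1)/2}$. The hard part will be organizing this recursion so that the accumulated error terms are balanced against the main term simultaneously across all $k-1$ levels; it is exactly this optimization over the $k-1$ convolution steps that produces the threshold exponent $\frac{d-1}{2}+\frac{1}{k-1}$ and shows the error is $o(|\E|^k/q)$ once $|\E|\gg q^{\frac{d-1}{2}+\frac{1}{k-1}}$.
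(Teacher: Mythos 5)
Your framework is, modulo language, the same as the paper's: expanding over $\widehat{\mathbf{1}_{S_t}}$ with the Gauss-sum bound $|\widehat{\mathbf{1}_{S_t}}(\mathbf{m})|\ll q^{-(d+1)/2}$ is exactly the multiplicity expander-mixing estimate on the Euclidean graph $E_q(d,Q,t)$ (Theorem \ref{euclidean graphs} with Lemmas \ref{expander}, \ref{lm872016}, \ref{8720162}), and ``repeatedly convolving with $\mathbf{1}_\E$ using the flatness of $\widehat{\mathbf{1}_\V}$'' is the Cayley digraph $C_\V$ argument (Theorem \ref{aaa}, Lemma \ref{hai}). The genuine gap is the step you announce with ``equivalently'': reducing the problem to $\|w_{k-1}\|_2^2=E_{k-1}(\E)$ is \emph{not} equivalent to the balanced identity $\sum_{\mathbf{m}}|\mathcal{S}(\mathbf{m})|^{2\ell}=q^{d}E_{\ell}(\E)$; it is an unbalanced split $k=(k-1)+1$ that loses a factor $\sqrt{q}$, and this loss destroys the claimed threshold for every $k\ge 4$. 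Since you have no Fourier control on $\mathbf{1}_\E$ itself, each step of your iteration must relax one membership constraint from $\E$ to $\V$ before invoking $|\widehat{\mathbf{1}_\V}|\ll q^{-(d+1)/2}$; writing $E_j(\E)=\Lambda_{2j}(\E)$ in the paper's notation, this yields (Lemma \ref{hai} and (\ref{317}))
\[E_j(\E)\ll q^{d-1}E_{j-1}(\E)+\frac{|\E|^{2j-1}}{q},\qquad\text{hence}\qquad E_{k-1}(\E)\ll q^{(d-1)(k-2)}|\E|+\frac{|\E|^{2k-3}}{q},\]
and the additive term $|\E|^{2k-3}/q$ cannot be removed by this method (for $\E=\V$ the relaxation is vacuous and the bound is attained up to constants). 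Feeding it into your requirement $q^{(d-1)/2}E_{k-1}(\E)^{1/2}|\E|^{1/2}=o(|\E|^k/q)$ forces $q^{d}=o(|\E|^{2})$, i.e.\ $|\E|\gg q^{d/2}$. Since $\frac{d-1}{2}+\frac{1}{k-1}<\frac{d}{2}$ whenever $k\ge 4$, your route proves the theorem only under a strictly stronger hypothesis; it reaches the stated threshold only for $k=3$, where the two exponents coincide.

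The repair is to stay with the balanced reduction you wrote first and never pass to $w_{k-1}$. For even $k$ the error is $\ll q^{-(d+1)/2}\sum_{\mathbf{m}}|\mathcal{S}(\mathbf{m})|^{k}=q^{(d-1)/2}\Lambda_{k}(\E)$, and the same recursion, solved with $\Lambda_2(\E)=|\E|$ under $|\E|>q^{(d-1)/2}$, gives $\Lambda_k(\E)\ll q^{(d-1)(k-2)/2}|\E|+|\E|^{k-1}/q$ (Theorem \ref{coco872016}); now the additive main term only demands $q^{(d-1)/2}=o(|\E|)$, while the first term produces exactly the exponent $\frac{d-1}{2}+\frac{1}{k-1}$. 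For odd $k$, apply Cauchy--Schwarz to get $(\Lambda_{k-1}(\E)\Lambda_{k+1}(\E))^{1/2}$ and use the even case (Theorem \ref{thmchinh2} and the paper's proof of Theorem \ref{thm1}). Two smaller corrections to your sketch: each pass of the recursion contributes a multiplicative factor $q^{d-1}$, not $q^{(d-1)/2}$ (the eigenvalue enters squared after solving the inequality $X\le A+BX^{1/2}$); and there is no delicate ``optimization across the $k-1$ levels''---the induction is mechanical, and the only balancing condition needed is $|\E|>q^{(d-1)/2}$, which makes the geometric series in (\ref{317}) sum to $O(1)$.
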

\begin{corollary}\label{co2}
Let $Q$ be a non-degenerate quadratic form on $\mathbb{F}_q^d$. Suppose that $\V\subset \mathbb{F}_q^d$ is a regular variety, and assume that $k\ge 3$ is an integer and $\E\subseteq \V$. If $ q^{\frac{d-1}{2}+\frac{1}{k-1}}=o(|\E|)$, then  we have 
\[\Delta_{k, Q}(\E)\supseteq \mathbb{F}_q^*.\]
\end{corollary}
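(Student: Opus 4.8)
\emph{Reduction to Theorem~\ref{thm1}.} Corollary~\ref{co2} is immediate from Theorem~\ref{thm1}. Fix $t\in\mathbb{F}_q^*$. Theorem~\ref{thm1} says the number of $k$-tuples $(\mathbf{x}_1,\dots,\mathbf{x}_k)\in\E^k$ with $Q(\mathbf{x}_1+\cdots+\mathbf{x}_k)=t$ is $(1-o(1))|\E|^k/q$, which is strictly positive once $q$ is large (the hypothesis $q^{\frac{d-1}{2}+\frac{1}{k-1}}=o(|\E|)$ forces $|\E|\to\infty$). Hence at least one such tuple exists, so $t\in\Delta_{k,Q}(\E)$; as $t$ was arbitrary, $\Delta_{k,Q}(\E)\supseteq\mathbb{F}_q^*$. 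All the content therefore lies in Theorem~\ref{thm1}, whose proof I now sketch.

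\emph{Set-up via a sum graph.} Write $f=\mathbf{1}_\E$ and let $f_{j}=f*\cdots*f$ ($j$ additive convolutions on $\mathbb{F}_q^d$), so $f_{j}(\mathbf{v})$ counts ordered $j$-tuples from $\E$ summing to $\mathbf{v}$. Let $V_t=\{\mathbf{v}\colon Q(\mathbf{v})=t\}$. Since $Q$ is non-degenerate and $t\ne 0$, the set $V_t$ is itself a regular variety: a standard Gauss-sum computation gives $|V_t|=(1+o(1))q^{d-1}$ and $|\widehat{\mathbf{1}_{V_t}}(\mathbf{m})|\ll q^{-(d+1)/2}$ for $\mathbf{m}\ne\mathbf{0}$. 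Introduce the graph $\mathcal{G}_t$ on $\mathbb{F}_q^d$ with $\mathbf{u}\sim\mathbf{v}\iff Q(\mathbf{u}+\mathbf{v})=t$. Diagonalising its adjacency operator by additive characters shows $\mathcal{G}_t$ is $|V_t|$-regular with every nontrivial eigenvalue bounded in modulus by $q^{d}\max_{\mathbf{m}\ne\mathbf{0}}|\widehat{\mathbf{1}_{V_t}}(\mathbf{m})|\ll q^{(d-1)/2}$; this is the spectral gap. The quantity to estimate is $N_t=\sum_{\mathbf{v}\in V_t}f_k(\mathbf{v})$, and grouping the sum as $(\mathbf{x}_1+\cdots+\mathbf{x}_{k-1})+\mathbf{x}_k$ rewrites it as a weighted edge count in $\mathcal{G}_t$, namely $N_t=\sum_{Q(\mathbf{u}+\mathbf{z})=t}f_{k-1}(\mathbf{u})f(\mathbf{z})$.

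\emph{Expander mixing.} Applying the weighted expander mixing lemma to $\mathcal{G}_t$ with weights $f_{k-1}$ and $f$, and using $\sum f_{k-1}=|\E|^{k-1}$, $\sum f=|\E|$, $\|f\|_2=|\E|^{1/2}$, gives
\[\Big|N_t-\tfrac{|V_t|}{q^{d}}\,|\E|^{k}\Big|\ \ll\ q^{(d-1)/2}\,\|f_{k-1}\|_2\,|\E|^{1/2}.\]
The main term is $(1+o(1))|\E|^k/q$, so it remains to show the right-hand side is $o(|\E|^k/q)$. This reduces to bounding $\|f_{k-1}\|_2^2=E_{k-1}(\E)$, the $(k-1)$-fold additive energy $\#\{(\mathbf{x}_i),(\mathbf{y}_i)\in\E^{2(k-1)}\colon \sum_i\mathbf{x}_i=\sum_i\mathbf{y}_i\}$.

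\emph{The main obstacle.} Everything hinges on proving $E_{k-1}(\E)=(1+o(1))|\E|^{2(k-1)}/q^{d}$, i.e. that the $(k-1)$-fold sumset of $\E$ is almost perfectly equidistributed; this is where the regularity of $\V$ must genuinely enter and where the exponent $\tfrac{1}{k-1}$ is born. The naive pointwise bound $|\widehat{\mathbf{1}_\E}(\mathbf{m})|\le|\E|/q^{d}$ controls $E_{k-1}$ only up to an additive error of size $|\E|^{2k-3}$, which propagates through the display above to the weaker, $k$-independent threshold $q^{(d+1)/2}=o(|\E|)$; moreover that bound is essentially sharp for additively structured sets (maximal Fourier concentration), so it cannot be beaten without using $\E\subseteq\V$. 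The plan is therefore to establish a recursion bounding $\|f_{j}\|_2$ in terms of $\|f_{j-1}\|_2$, driven again by the Fourier decay of $\V$ (equivalently by a spectral gap of the associated sum graphs), so that each of the $k-1$ convolutions pushes $f_j$ closer to the uniform density $|\E|^j/q^{d}$. Balancing the accumulated error across these $k-1$ steps is what converts the single spectral gap $q^{(d-1)/2}$ into a gain of $q^{1/(k-1)}$ and yields the stated threshold $q^{\frac{d-1}{2}+\frac{1}{k-1}}=o(|\E|)$. Carrying out this energy recursion sharply is the crux and the only delicate point; the remaining ingredients are routine Gauss-sum and Parseval estimates.
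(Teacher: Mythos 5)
Your first paragraph is exactly the paper's own (implicit) proof of Corollary~\ref{co2}: the paper states the corollary without a separate proof precisely because positivity of the count $(1-o(1))|\E|^k/q$ for each $t\in\mathbb{F}_q^*$ is immediate from Theorem~\ref{thm1}. Had you stopped there, this would be a complete match. The problem lies in your sketch of Theorem~\ref{thm1}, which you correctly identify as carrying all the content, and which deviates from the paper in a way that does not work.

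First, the maximally unbalanced split $N_t=\sum_{Q(\mathbf{u}+\mathbf{z})=t}f_{k-1}(\mathbf{u})f(\mathbf{z})$ is too lossy once $k\ge 4$. The mixing error is $q^{(d-1)/2}\|f_{k-1}\|_2\,|\E|^{1/2}$ with $\|f_{k-1}\|_2^2=\Lambda_{2k-2}(\E)$, and the only estimate the variety structure yields (the paper's recursion over the Cayley digraph $C_\V$, culminating in Theorem~\ref{coco872016}) is $\Lambda_{2k-2}(\E)\ll q^{(d-1)(k-2)}|\E|+|\E|^{2k-3}/q$. The second term, which is sharp when $\E=\V$, contributes $q^{(d-2)/2}|\E|^{k-1}$ to the error, and making that $o(|\E|^k/q)$ forces $q^{d/2}=o(|\E|)$ --- strictly stronger than the hypothesis $q^{\frac{d-1}{2}+\frac{1}{k-1}}=o(|\E|)$ as soon as $k\ge 4$. (For $k=3$ your split coincides with the paper's and is fine.) The paper avoids this by splitting the $k$ summands into two nearly equal halves (Lemmas~\ref{lm872016} and~\ref{8720162}), so that only the much smaller half-energies $\Lambda_k$, resp.\ $\Lambda_{k\pm 1}$, enter the error, and the lossy term then costs only $|\E|\gg q^{(d-1)/2}$. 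Second, your proposed crux --- proving $E_{k-1}(\E)=(1+o(1))|\E|^{2(k-1)}/q^{d}$ --- is false, not merely difficult: if $\V$ is a sphere and $\E=\V\cap W$ for a suitable hyperplane $W$ (so $|\E|\asymp q^{d-2}$, which satisfies the hypothesis for $d\ge 5$), then all $(k-1)$-fold sums of $\E$ lie in a single coset of a hyperplane, whence $E_{k-1}(\E)\ge |\E|^{2(k-1)}/q^{d-1}$, a factor $q$ above your claimed asymptotic. The paper never needs, and never proves, any such equidistribution; it only needs one-sided upper bounds on the half-energies, and those bounds inevitably carry the extra term $|\E|^{j-1}/q$ reflecting that sums of points of $\E$ land on the codimension-one set $\V$ far more often than uniformity would predict. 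So the gap is genuine: with the decomposition as you set it up, no provable energy estimate closes the argument in the range $q^{\frac{d-1}{2}+\frac{1}{k-1}}\ll |\E|\ll q^{d/2}$ when $k\ge 4$; the missing idea is the balanced splitting.
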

 Let $P(\tmmathbf{x})=\sum\limits_{j=1}^d a_jx_j^s$ with $s\geq 2, a_j\neq 0$ for all $j=1,\dots,d$ be a diagonal polynomial in $\mathbb F_q[x_1,\dots,x_d]$. We obtain the following generalization of Theorem  \ref{028}, which is inspired by the paper \cite{vinherdos}.
 \begin{theorem}\label{thm33}
 Suppose that $\V\subset \mathbb{F}_q^d$ is a regular variety, and assume that $k\ge 3$ is an integer and $\E\subseteq \V$. For $X\subseteq \mathbb{F}_q$,   if $|X||\E|^{2k-2}\gg q^{(d-1)(k-1)+2}$,  we have 
\[|X+\Delta_{k, P}(\E)|\gg q.\]
 \end{theorem}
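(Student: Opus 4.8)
The plan is to count ordered representations and apply Cauchy--Schwarz, turning the sumset lower bound into an $L^2$ (energy) estimate that I can attack by Fourier analysis. For $\lambda\in\mathbb{F}_q$ set
\[
N(\lambda)=\#\{(x,\mathbf{x}^1,\dots,\mathbf{x}^k)\in X\times\E^k:\ x+P(\mathbf{x}^1+\cdots+\mathbf{x}^k)=\lambda\}.
\]
Then $\sum_\lambda N(\lambda)=|X||\E|^k$, and $N$ is supported on $X+\Delta_{k,P}(\E)$, so by Cauchy--Schwarz
\[
|X+\Delta_{k,P}(\E)|\ \ge\ \frac{\big(\sum_\lambda N(\lambda)\big)^2}{\sum_\lambda N(\lambda)^2}=\frac{|X|^2|\E|^{2k}}{\sum_\lambda N(\lambda)^2}.
\]
Thus it suffices to prove the energy bound $\sum_\lambda N(\lambda)^2\ll |X|^2|\E|^{2k}/q$.

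Next I would open up the energy with additive characters. Writing $\widehat{\mathbf{1}_X}(t)=\sum_{x\in X}\chi(tx)$ and the key exponential sum
\[
S(t)=\sum_{\mathbf{x}^1,\dots,\mathbf{x}^k\in\E}\chi\!\left(tP(\mathbf{x}^1+\cdots+\mathbf{x}^k)\right),
\]
orthogonality gives $\sum_\lambda N(\lambda)^2=\tfrac1q\sum_{t\in\mathbb{F}_q}|\widehat{\mathbf{1}_X}(t)|^2\,|S(t)|^2$. The term $t=0$ contributes the main term $\tfrac1q|X|^2|\E|^{2k}$. For the remaining terms I would pull out $\max_{t\neq0}|S(t)|$ and use Parseval $\sum_t|\widehat{\mathbf{1}_X}(t)|^2=q|X|$, bounding the off-diagonal by $|X|\max_{t\neq0}|S(t)|^2$. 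Hence the whole theorem reduces to the single estimate
\[
\max_{t\neq0}|S(t)|\ \ll\ q^{\frac{(d-1)(k-1)+1}{2}}\,|\E|,
\]
and plugging this back in reproduces \emph{exactly} the hypothesis $|X||\E|^{2k-2}\gg q^{(d-1)(k-1)+2}$. (This step is the ``spectral'' one: $\max_{t\neq0}|S(t)|$ plays the role of the second eigenvalue in the expander mixing lemma for the relevant Cayley sum graph on $\mathbb{F}_q$, exactly as in the approach of \cite{copi} and \cite{vinherdos}.)

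To estimate $S(t)$ I would exploit the diagonal form of $P$. Since $r^{(k)}_\E=\mathbf{1}_\E^{*k}$ has Fourier transform $\widehat{\mathbf{1}_\E}(\mathbf{m})^k$, Fourier inversion yields
\[
S(t)=\frac{1}{q^d}\sum_{\mathbf{m}\in\mathbb{F}_q^d}\widehat{\mathbf{1}_\E}(\mathbf{m})^k\,K_t(\mathbf{m}),\qquad K_t(\mathbf{m})=\sum_{\mathbf{z}\in\mathbb{F}_q^d}\chi\!\left(tP(\mathbf{z})+\mathbf{m}\cdot\mathbf{z}\right).
\]
Because $P(\mathbf z)=\sum_j a_j z_j^{\,s}$ is diagonal, $K_t$ factors as $\prod_{j=1}^d\sum_{z}\chi(t a_j z^{s}+m_j z)$, a product of one-variable Weil sums; for $t\neq0$ each factor is $O_s(q^{1/2})$, so $|K_t(\mathbf{m})|\ll q^{d/2}$. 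This collapses the problem to controlling the Fourier $L^k$-mass $\sum_{\mathbf{m}}|\widehat{\mathbf{1}_\E}(\mathbf{m})|^k$: the frequency $\mathbf m=\mathbf 0$ contributes $|\E|^k$, and for $\mathbf m\neq\mathbf 0$ one combines Parseval $\sum_{\mathbf{m}}|\widehat{\mathbf{1}_\E}(\mathbf{m})|^2=q^d|\E|$ with the Fourier decay forced by $\E\subseteq\V$ and the regularity of $\V$.

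The main obstacle is precisely this last moment estimate, namely establishing $\sum_{\mathbf m\neq\mathbf 0}|\widehat{\mathbf{1}_\E}(\mathbf m)|^k\ll q^{(d-1)k/2+1}|\E|$ \emph{uniformly in $\E$}. A clean $L^2$--$L^\infty$ interpolation would need $\max_{\mathbf m\neq\mathbf 0}|\widehat{\mathbf{1}_\E}(\mathbf m)|\ll q^{(d-1)/2}$, which is exactly the decay that regularity gives for $\V$ itself but \emph{not} automatically for an arbitrary subset: a set $\E$ concentrated on a hyperplane section of $\V$ can have one large Fourier coefficient. The delicate point is therefore to absorb such concentration --- either by noting that on the offending frequencies the completed sums $K_t(\mathbf{m})$ carry extra cancellation (a dimension reduction back to a regular variety in $\mathbb{F}_q^{d-1}$), or by an averaged estimate that never passes to the pointwise maximum. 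The Weil bound for the diagonal $P$ and the regularity of $\V$ are the two inputs that make this go through; once the displayed bound on $\max_{t\neq0}|S(t)|$ is in hand, the rest is the bookkeeping already carried out above.
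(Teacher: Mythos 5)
Your opening moves --- counting representations, Cauchy--Schwarz, and the reduction to the energy bound $\sum_\lambda N(\lambda)^2 \ll |X|^2|\E|^{2k}/q$ --- coincide with the paper's first step (which quotes the proof of Theorem 2.6 of \cite{vinherdos} for exactly this inequality). The gap is everything after that: you reduce the theorem to the pointwise estimate $\max_{t\neq 0}|S(t)| \ll q^{((d-1)(k-1)+1)/2}|\E|$, propose to prove it by Weil bounds plus a $k$-th moment estimate for $\widehat{\mathbf{1}_\E}$, and then concede that this moment estimate is precisely what you cannot establish; that estimate is the whole content of the theorem, so the proof is incomplete. Moreover, your reduction is not merely unfinished but quantitatively unworkable: in the expansion $S(t)=q^{-d}\sum_{\mathbf{m}}\widehat{\mathbf{1}_\E}(\mathbf{m})^k K_t(\mathbf{m})$, bounded term by term, the zero frequency alone contributes $\asymp q^{-d/2}|\E|^k$ (for $s=2$ the factors of $K_t(\mathbf{0})$ are Gauss sums of modulus exactly $\sqrt{q}$, so there is no further cancellation available termwise), and
\[
q^{-d/2}|\E|^k \ll q^{\frac{(d-1)(k-1)+1}{2}}|\E| \quad\Longleftrightarrow\quad |\E| \ll q^{\frac{d-1}{2}+\frac{d+1}{2(k-1)}}.
\]
This fails whenever $(d-1)(k-1)>d+1$ and $\E$ occupies a positive proportion of $\V$ (for instance $k=3$, $d\ge 4$, $\E=\V$, $|\E|\asymp q^{d-1}$); such pairs lie squarely inside the theorem's hypothesis, since for $\E=\V$ any nonempty $X$ satisfies $|X||\E|^{2k-2}\gg q^{(d-1)(k-1)+2}$. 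So even if your moment estimate for $\mathbf{m}\neq \mathbf{0}$ were granted, the argument collapses at the main term: the loss comes from taking the maximum over $t$ and then the triangle inequality over $\mathbf{m}$, which discards exactly the cancellation the theorem needs.

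The paper avoids pointwise bounds on $S(t)$ altogether. It bounds $\sum_t \nu_{P,k}(t)^2$ in one stroke as an edge count between two multisets of $\mathbb{F}_q^{2d+1}$ in Vinh's Cayley digraph $C_{P'}(\mathbb{F}_q^{2d+1})$, a $(q^{2d+1},q^{2d},q^{d})$-digraph by Lemma \ref{bodechinh}, using the multiplicity version of the expander mixing lemma (Lemma \ref{edge-c-sesb}); this is the content of Lemmas \ref{3171} and \ref{3172}. In Fourier terms that is a single Cauchy--Schwarz over all frequencies $(t,\mathbf{m}_1,\mathbf{m}_2)$ simultaneously, with Parseval applied to the multiplicity functions, so the error term comes out as $q^{d}|X|\Lambda_k(\E)^2$ (resp. $q^{d}|X|\Lambda_{k-1}(\E)\Lambda_{k+1}(\E)$) --- an $L^2$ additive-energy quantity, not an $L^k$ Fourier moment. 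The regularity of $\V$ then enters in a way entirely absent from your sketch: not through pointwise decay of $\widehat{\mathbf{1}_\E}$ (which, as you correctly observe, fails for subsets of $\V$), but through the recursive bound on $\Lambda_k(\E)$ for $\E\subseteq\V$ (Lemma \ref{hai} and Theorem \ref{coco872016}), proved by a second application of the mixing lemma, this time to the Cayley digraph $C_\V$ whose nontrivial eigenvalues are $q^{d}\,\widehat{\mathbf{1}_\V}(-\mathbf{m})\ll q^{(d-1)/2}$, followed by induction on $k$. Your closing remark that one might use ``an averaged estimate that never passes to the pointwise maximum'' is the right instinct, but it is exactly this two-graph, energy-based argument that must be carried out, and nothing in your proposal does it.
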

 \begin{corollary}
  Suppose that $\V\subset \mathbb{F}_q^d$ is a regular variety, and assume that $k\ge 3$ is an integer and $\E\subseteq V$. If $|\E|\gg q^{\frac{d-1}{2}+\frac{1}{k-1}}$,  we have 
\[|\Delta_{k, P}(\E)|\gg q.\]
 \end{corollary}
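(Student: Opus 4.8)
The plan is to obtain this corollary as a direct specialization of Theorem~\ref{thm33}, choosing the free set $X$ to be a single point. The key observation is that the cardinality hypothesis $|\E|\gg q^{\frac{d-1}{2}+\frac{1}{k-1}}$ has been calibrated precisely so that, after raising to the power $2k-2$, it reproduces the threshold appearing in Theorem~\ref{thm33} in the case $|X|=1$.

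First I would carry out the elementary arithmetic. Raising the hypothesis to the $(2k-2)$-th power gives
\[
|\E|^{2k-2}\gg q^{(2k-2)\left(\frac{d-1}{2}+\frac{1}{k-1}\right)}=q^{(d-1)(k-1)+2},
\]
since $(2k-2)\cdot\frac{d-1}{2}=(d-1)(k-1)$ and $(2k-2)\cdot\frac{1}{k-1}=2$.

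Next I would apply Theorem~\ref{thm33} with $X=\{0\}$. Then $|X|=1$, so the left-hand side of the hypothesis of that theorem becomes $|X|\,|\E|^{2k-2}=|\E|^{2k-2}\gg q^{(d-1)(k-1)+2}$, which is exactly the condition required. The theorem therefore yields $|X+\Delta_{k,P}(\E)|\gg q$. Since $X+\Delta_{k,P}(\E)=0+\Delta_{k,P}(\E)=\Delta_{k,P}(\E)$, this is precisely $|\Delta_{k,P}(\E)|\gg q$, completing the argument.

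There is essentially no genuine obstacle here: the corollary is a formal consequence of the theorem, and all the analytic content already resides in Theorem~\ref{thm33}. The only point requiring mild care is the bookkeeping of implied constants, namely that the absolute constant hidden in the hypothesis $|\E|\gg q^{\frac{d-1}{2}+\frac{1}{k-1}}$ must be chosen large enough to activate the conclusion $|X+\Delta_{k,P}(\E)|\gg q$ of the theorem; this is routine. One could equally well take any singleton $X$, or more generally any fixed nonempty set of bounded size, and reach the same conclusion.
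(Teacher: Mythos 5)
Your proposal is correct and is exactly the intended derivation: the paper states this corollary as an immediate consequence of Theorem~\ref{thm33}, obtained by taking $X$ to be a singleton (so $|X|=1$ and $X+\Delta_{k,P}(\E)$ is a translate of $\Delta_{k,P}(\E)$), with the exponent arithmetic $(2k-2)\bigl(\tfrac{d-1}{2}+\tfrac{1}{k-1}\bigr)=(d-1)(k-1)+2$ matching precisely as you computed.
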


The rest of this paper is organized as follows: In Sections $2$ and $3$, we construct some graphs which are main tools of our later proofs.  The proofs of Theorems \ref{thm1} and \ref{thm33} are presented in Sections $4$ and $5$, respectively. 
 \section{Pseudo-random graphs}
For a graph $G$ of order $n$, let $\lambda_1 \geq \lambda_2 \geq \ldots \geq \lambda_n$ be
the eigenvalues of its adjacency matrix. The quantity $\lambda (G) = \max
\{\lambda_2, - \lambda_n \}$ is called the second eigenvalue of $G$. A graph $G
= (V, E)$ is called an $(n, d, \lambda)$-graph if it is $d$-regular, has $n$
vertices, and the second eigenvalue of $G$ is at most $\lambda$. 

For two (not necessarily) disjoint subsets of vertices $U,
W \subseteq V$, let $e (U, W)$ be the number of ordered pairs $(u, w)$ such that
$u \in U$, $w \in W$, and $(u, w)$ is an edge of $G$. It is well known that if $\lambda$ is much smaller than the degree $d$, then $G$ has certain random-like properties.  More precisely, we have the following result on the number of edges between subsets in an $(n, d, \lambda)$-graph.
\begin{lemma}[Chapter 9, \cite{as}]\label{edge}
  Let $G = (V, E)$ be an $(n, d, \lambda)$-graph. For any two sets $B, C
  \subseteq V$, we have
  \[ \left| e (B, C) - \frac{d|B | |C|}{n} \right| \leq \lambda \sqrt{|B| |C|}. \]
\end{lemma}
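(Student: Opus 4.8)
The plan is to recognize $e(B,C)$ as a bilinear form in the adjacency matrix of $G$ and to analyze it through the spectral decomposition of that matrix. First I would let $A$ denote the adjacency matrix of $G$ and let $\mathbf{1}_B,\mathbf{1}_C\in\mathbb{R}^n$ be the characteristic (indicator) vectors of $B$ and $C$. Since $e(B,C)$ counts ordered pairs $(u,w)$ with $u\in B$, $w\in C$, and $uw\in E$, it equals $\sum_{u,w}(\mathbf{1}_B)_u A_{uw}(\mathbf{1}_C)_w=\mathbf{1}_B^{\top}A\,\mathbf{1}_C$. Because $G$ is undirected, $A$ is a real symmetric matrix, so by the spectral theorem it admits an orthonormal basis of eigenvectors $v_1,\ldots,v_n$ with $A v_i=\lambda_i v_i$ and $\lambda_1\geq\cdots\geq\lambda_n$. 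The $d$-regularity of $G$ forces $\lambda_1=d$ with normalized eigenvector $v_1=\frac{1}{\sqrt{n}}\mathbf{j}$, where $\mathbf{j}$ is the all-ones vector.

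Next I would expand the two indicator vectors in this eigenbasis as $\mathbf{1}_B=\sum_i\alpha_i v_i$ and $\mathbf{1}_C=\sum_i\beta_i v_i$, so that orthonormality together with $A v_i=\lambda_i v_i$ gives $e(B,C)=\mathbf{1}_B^{\top}A\,\mathbf{1}_C=\sum_{i=1}^n\lambda_i\alpha_i\beta_i$. The $i=1$ summand is the main term: since $\alpha_1=\langle\mathbf{1}_B,v_1\rangle=|B|/\sqrt{n}$ and $\beta_1=|C|/\sqrt{n}$, it equals $d\,|B||C|/n$, which is precisely the quantity subtracted off in the statement. Thus $e(B,C)-d|B||C|/n=\sum_{i=2}^n\lambda_i\alpha_i\beta_i$, and I would bound this error by pulling out $\max_{i\geq 2}|\lambda_i|\leq\lambda$ (the definition of the second eigenvalue) and applying Cauchy--Schwarz: $\big|\sum_{i=2}^n\lambda_i\alpha_i\beta_i\big|\leq\lambda\sum_{i=2}^n|\alpha_i||\beta_i|\leq\lambda\big(\sum_i\alpha_i^2\big)^{1/2}\big(\sum_i\beta_i^2\big)^{1/2}$. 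Since the $v_i$ are orthonormal, Parseval gives $\sum_i\alpha_i^2=\|\mathbf{1}_B\|^2=|B|$ and likewise $\sum_i\beta_i^2=|C|$, yielding the claimed bound $\lambda\sqrt{|B||C|}$.

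I do not expect a genuine obstacle here, as the argument is a clean spectral computation, but two points require care. For the identification of the top eigenpair, I would note explicitly that $d$-regularity makes $\mathbf{j}$ an eigenvector of eigenvalue $d$, and that a direct argument bounding $|\lambda_i|\leq d$ via row sums (or the Perron--Frobenius theorem) guarantees $\lambda_1=d$, so that no larger eigenvalue contaminates the main term. For the Cauchy--Schwarz step, the mild trick is to extend the sum from the range $i\geq 2$ back to all $i\geq 1$; this only increases the right-hand side and lets me invoke Parseval to replace $\sum_i\alpha_i^2$ and $\sum_i\beta_i^2$ by the clean norms $|B|$ and $|C|$, rather than carrying the first coordinate as a correction. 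The absolute value on the left is then immediate, since the same bound controls the error term in both directions.
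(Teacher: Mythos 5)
Your proof is correct, and it is precisely the standard spectral argument (decomposition of $\mathbf{1}_B^{\top}A\,\mathbf{1}_C$ along an orthonormal eigenbasis, extraction of the main term from $v_1=\mathbf{j}/\sqrt{n}$, and Cauchy--Schwarz plus Parseval on the tail) that appears in the cited source, Chapter 9 of Alon--Spencer; the paper itself states the lemma without proof. Your two points of care are handled properly, including the observation that extending the Cauchy--Schwarz sum back to $i\geq 1$ only enlarges the bound, so nothing further is needed.
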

In \cite{hanson}, Hanson et al. proved the following version of the expander mixing lemma on the number of edges between multi-sets of vertices in an $(n, d, \lambda)$-graph.
\begin{lemma}[\cite{hanson}]\label{expander}
Let $G=(V, E)$ be an $(n,d, \lambda)$-graph. The number of edges between two multi-sets of vertices $B$ and $C$ in $G$, which is denoted by $e(B, C)$, satisfies:
\[\left\vert e(B, C)-\frac{d|B||C|}{n}\right\vert\le \lambda\sqrt{\sum_{b\in B}m_B(b)^2}\sqrt{\sum_{c\in C}m_C(c)^2},\] where $m_X(x)$ is the multiplicity of $x$ in $X$.
\end{lemma}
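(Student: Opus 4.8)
The plan is to mimic the standard proof of the expander mixing lemma (Lemma~\ref{edge}), replacing characteristic vectors by multiplicity vectors. Let $A$ denote the adjacency matrix of $G$; since $G$ is $d$-regular and undirected, $A$ is a real symmetric matrix whose largest eigenvalue is $\lambda_1 = d$ with normalized eigenvector $\mathbf{1}/\sqrt{n}$, where $\mathbf{1}$ is the all-ones vector, and all remaining eigenvalues satisfy $|\lambda_i| \le \lambda$ by the definition of $\lambda(G)$. For the multi-sets $B$ and $C$, I would introduce the multiplicity vectors $\mathbf{m}_B, \mathbf{m}_C \in \mathbb{R}^V$ whose $v$-th coordinates are $m_B(v)$ and $m_C(v)$, so that the edge count with multiplicity is exactly the bilinear form $e(B,C) = \mathbf{m}_B^{\top} A\, \mathbf{m}_C$.

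The next step is the spectral decomposition. I would split each multiplicity vector into its component along $\mathbf{1}$ and an orthogonal remainder: write $\mathbf{m}_B = \frac{|B|}{n}\mathbf{1} + \mathbf{v}_B$ with $\mathbf{v}_B \perp \mathbf{1}$, and likewise $\mathbf{m}_C = \frac{|C|}{n}\mathbf{1} + \mathbf{v}_C$; here $|B| = \sum_v m_B(v) = \langle \mathbf{m}_B, \mathbf{1}\rangle$ denotes the total size of the multi-set. Substituting into the bilinear form and using $A\mathbf{1} = d\mathbf{1}$ together with $\langle \mathbf{v}_B, \mathbf{1}\rangle = \langle \mathbf{v}_C, \mathbf{1}\rangle = 0$, the two cross terms vanish and the principal term evaluates to $\frac{|B||C|}{n^2}\,\mathbf{1}^{\top} A\,\mathbf{1} = \frac{d|B||C|}{n}$. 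This leaves the clean identity $e(B,C) - \frac{d|B||C|}{n} = \mathbf{v}_B^{\top} A\,\mathbf{v}_C$.

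To finish, I would bound the error term. Since $\mathbf{v}_B$ and $\mathbf{v}_C$ lie in the orthogonal complement of $\mathbf{1}$, on which $A$ acts with spectral norm at most $\lambda$, the Cauchy--Schwarz inequality gives $|\mathbf{v}_B^{\top} A\,\mathbf{v}_C| \le \lambda \|\mathbf{v}_B\|\,\|\mathbf{v}_C\|$. Finally, the Pythagorean identity yields $\|\mathbf{v}_B\|^2 = \|\mathbf{m}_B\|^2 - \frac{|B|^2}{n} \le \|\mathbf{m}_B\|^2 = \sum_v m_B(v)^2$, and similarly for $C$; combining these estimates produces exactly the claimed bound.

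The computations here are all routine, so there is no single deep obstacle; the only point that genuinely requires care is the spectral inequality $|\mathbf{v}_B^{\top} A\,\mathbf{v}_C| \le \lambda\|\mathbf{v}_B\|\,\|\mathbf{v}_C\|$, which relies on $A$ being symmetric, hence diagonalizable in an orthonormal eigenbasis, so that its restriction to $\mathbf{1}^{\perp}$ has operator norm $\max_{i\ge 2}|\lambda_i| \le \lambda$. A secondary bookkeeping subtlety is to read the sum $\sum_{b\in B} m_B(b)^2$ appearing in the statement as the sum $\sum_v m_B(v)^2$ over \emph{distinct} vertices $v$, that is, as $\|\mathbf{m}_B\|^2$, which is precisely the quantity the orthogonal-projection estimate produces.
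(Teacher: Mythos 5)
Your proof is correct. The paper does not actually prove this lemma --- it quotes it from \cite{hanson} --- and your argument (write $e(B,C)=\mathbf{m}_B^{\top}A\,\mathbf{m}_C$, split each multiplicity vector into its component along $\mathbf{1}$ plus an orthogonal part, kill the cross terms via $A\mathbf{1}=d\mathbf{1}$, and bound the remainder by the spectral norm $\lambda$ of $A$ on $\mathbf{1}^{\perp}$ together with $\|\mathbf{v}_B\|^2\le\sum_v m_B(v)^2$) is exactly the standard proof of the multi-set expander mixing lemma given in that reference, including the correct reading of $\sum_{b\in B}m_B(b)^2$ as a sum over distinct vertices.
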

\subsection{Finite Euclidean graphs}

Let $Q$ be a non-degenerate quadratic form on $\mathbb{F}_q^d$. For any $t \in \mathbb{F}_q$, the finite Euclidean
graph $E_q (d, Q, t)$ is defined as the graph with vertex set $\mathbb{F}_q^d$ and the edge
set

\begin{equation}
  E =\{(\tmmathbf{x}, \tmmathbf{y}) \in \mathbb{F}_q^d \times \mathbb{F}_q^d \, |\, \tmmathbf{x} \neq \tmmathbf{y}, \, Q (\tmmathbf{x} - \tmmathbf{y}) = t\}.
\end{equation}

The $(n, d, \lambda)$ form of the graph $E_{q}(d, Q, t)$ is estimated in the following theorem.
\begin{theorem}[Bannai et al. \cite{bst}, Kwok \cite{kwok}] \label{euclidean graphs}
  Let $Q$ be a non-degenerate quadratic form on $\mathbb{F}_q^d$. For any $t \in \mathbb{F}_q^{*}$, the graph $E_q (d, Q, t)$ is a $(q^d, (1+o(1))q^{d-1}, 2q^{(d-1)/2})$-graph.
\end{theorem}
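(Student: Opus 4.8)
The plan is to recognize $E_q(d,Q,t)$ as a Cayley graph on the additive group $(\mathbb{F}_q^d,+)$ with connection set $S_t:=\{\mathbf{z}\in\mathbb{F}_q^d\colon Q(\mathbf{z})=t\}$, which is symmetric because $Q(-\mathbf{z})=Q(\mathbf{z})$. The vertex count $n=q^d$ is immediate, and the degree of every vertex equals $|S_t|$, the number of points on the affine quadric $Q=t$. Since $Q$ is non-degenerate and $t\neq 0$, the standard point count for quadrics gives $|S_t|=q^{d-1}+O(q^{(d-1)/2})=(1+o(1))q^{d-1}$, which establishes regularity with the claimed degree.

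For the spectral bound I would exploit the fact that the eigenvalues of a Cayley graph on an abelian group are indexed by its characters: writing $\chi$ for the canonical additive character of $\mathbb{F}_q$, the eigenvalue attached to $\mathbf{m}\in\mathbb{F}_q^d$ is $\lambda_{\mathbf{m}}=\sum_{\mathbf{z}\in S_t}\chi(\mathbf{m}\cdot\mathbf{z})$. The trivial character $\mathbf{m}=\mathbf{0}$ recovers $\lambda_{\mathbf{0}}=|S_t|$, the degree, so it remains to prove $|\lambda_{\mathbf{m}}|\le 2q^{(d-1)/2}$ for all $\mathbf{m}\neq\mathbf{0}$. To this end I would detect the constraint $Q(\mathbf{z})=t$ through orthogonality, $\mathbf{1}\{Q(\mathbf{z})=t\}=q^{-1}\sum_{s\in\mathbb{F}_q}\chi(s(Q(\mathbf{z})-t))$, which gives
\[\lambda_{\mathbf{m}}=\frac{1}{q}\sum_{s\in\mathbb{F}_q}\chi(-st)\sum_{\mathbf{z}\in\mathbb{F}_q^d}\chi\bigl(sQ(\mathbf{z})+\mathbf{m}\cdot\mathbf{z}\bigr).\]
The $s=0$ term vanishes because $\mathbf{m}\neq\mathbf{0}$. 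Assuming $q$ odd, I would diagonalize $Q(\mathbf{z})=\sum_i a_iz_i^2$, so the inner sum factors into one-dimensional Gauss sums; completing the square in each factor evaluates it as $g^d\,\eta(s)^d\,\eta(a_1\cdots a_d)\,\chi\!\left(-Q^*(\mathbf{m})/(4s)\right)$, where $g$ is the quadratic Gauss sum with $|g|=\sqrt{q}$, $\eta$ is the quadratic character, and $Q^*(\mathbf{m})=\sum_i m_i^2/a_i$ is the dual form. Substituting back, the surviving sum over $s\neq 0$ becomes a Kloosterman sum when $d$ is even (since $\eta(s)^d=1$) and a Salié sum when $d$ is odd (since $\eta(s)^d=\eta(s)$).

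The crux of the estimate—and the step I expect to be the main obstacle—is bounding this remaining one-variable exponential sum uniformly. Because $t\neq 0$, the coefficient of $s$ in the sum $\sum_{s\neq 0}\chi(-st-Q^*(\mathbf{m})/(4s))$ never vanishes, so the Weil bound yields $2\sqrt{q}$ for the Kloosterman case; the explicit evaluation of Salié sums supplies the same bound $2\sqrt{q}$ in the odd case. Combining this with the prefactor $|g|^d/q=q^{d/2-1}$ gives
\[|\lambda_{\mathbf{m}}|\le q^{d/2-1}\cdot 2\sqrt{q}=2q^{(d-1)/2}\]
for every $\mathbf{m}\neq\mathbf{0}$, hence $\lambda(G)=\max\{\lambda_2,-\lambda_n\}\le 2q^{(d-1)/2}$. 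Together with the vertex count and degree computed above, this shows $E_q(d,Q,t)$ is a $(q^d,(1+o(1))q^{d-1},2q^{(d-1)/2})$-graph. The delicate points to handle carefully are the uniform treatment of the even and odd $d$ cases through the quadratic twist $\eta(s)^d$, and the invocation of the correct completeness conditions for the Weil and Salié bounds; the rest is routine Gauss-sum bookkeeping.
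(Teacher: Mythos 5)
Your argument is correct, but note that the paper does not actually prove this statement: it is imported as a known result, cited to Bannai--Shimabukuro--Tanaka \cite{bst} and Kwok \cite{kwok}, so there is no in-paper proof to compare against. What you have written is essentially the standard proof from that literature (and from Iosevich--Rudnev \cite{ir}): view $E_q(d,Q,t)$ as a Cayley graph on $(\mathbb{F}_q^d,+)$ with connection set the quadric $\{Q=t\}$, read off the eigenvalues as character sums, detect the constraint $Q(\mathbf{z})=t$ with additive characters, diagonalize $Q$, evaluate the resulting Gauss sums, and finish with the Weil/Sali\'e bound $2\sqrt{q}$ on the surviving one-variable sum. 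It is worth observing that the paper's own Theorem 4.1 (the spectral bound for $C_{\mathcal V}$) uses the same Cayley-graph eigenvalue mechanism, but there the exponential sum bound is an assumption (the Fourier decay in the definition of a regular variety), whereas here you must prove it --- and you do.

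Two small points deserve care. First, your computation assumes $q$ odd (diagonalization of $Q$ and completing the square require division by $2$ and $4s$); the theorem as stated has no such restriction, and characteristic $2$ requires a separate treatment of quadratic forms, though the paper's applications elsewhere (e.g.\ Lemma 5.1) also implicitly work with odd $q$. Second, when $\mathbf{m}\neq\mathbf{0}$ is isotropic for the dual form, i.e.\ $Q^*(\mathbf{m})=0$, the sum over $s\neq 0$ is not a genuine Kloosterman or Sali\'e sum: for $d$ even it collapses to $\sum_{s\neq 0}\chi(-st)=-1$, and for $d$ odd to a twisted Gauss sum of modulus $\sqrt{q}$. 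Both are trivially at most $2\sqrt{q}$, so the conclusion stands, but your appeal to ``the Weil bound'' should be split into these cases to be airtight.
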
 
 
\section{Pseudo-random digraphs}
Let $G$ be a directed graph (digraph) on $n$ vertices where the in-degree and out-degree of each vertex are both $d$. 

We define the adjacency matrix of $G$, denoted by $A_G$,  as follows: $a_{ij} = 1$ if there is a directed edge from $i$ to $j$ and zero otherwise. Let $\lambda_1=d, \lambda_2, \ldots, \lambda_n$ be the eigenvalues of $A_G$.  These numbers are complex numbers, so we can not order them, but we have $|\lambda_i|\le d$ for all $1\le i \le n$. We define $\lambda(G):=\max_{|\lambda_i|\ne d}|\lambda_i|$.

An $n\times n$ matrix $A$ is normal if $A^tA = AA^t$, where $A^t$ is the transpose of $A$. We say that a digraph is normal if its adjacency matrix is a normal matrix. There is a simple way to check whether a digraph is normal. In a digraph $G$, let $N^+(x,y)$ be the set of vertices $z$ such that $\overrightarrow{xz}, \overrightarrow{yz}$ are edges, and $N^-(x,y)$ be the set of vertices $z$ such that $\overrightarrow{zx}, \overrightarrow{zy}$ are  edges. One can easily check that $G$ is normal if and only if $|N^+(x,y)| = |N^-(x,y)|$ for any two vertices $x$ and $y$. 

We say that  $G$ is an $(n, d, \lambda)$-digraph if $G$ is normal and $\lambda(G) \leq \lambda$.  Let $G$ be an $(n,d,\lambda)$-digraph. For two (not necessarily) disjoint subsets of vertices $U,
W \subset V$, let $e(U, W)$ be the number of ordered pairs $(u, w)$ such that
$u \in U$, $w \in W$, and $\overrightarrow{uw} \in E(G)$ (where $E(G)$ is the edge set of $G$). Vu \cite{van} developed a directed version of the Lemma \ref{edge} as follows.
\begin{lemma}[Vu, \cite{van}]\label{edge2}
  Let $G = (V, E)$ be an $(n, d, \lambda)$-digraph. For any two sets $B, C
  \subset V$, we have
  \[ \left| e(B, C) - \frac{d}{n}|B | |C| \right| \leq \lambda \sqrt{|B| |C|}. \]
\end{lemma}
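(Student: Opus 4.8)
The plan is to prove the inequality by a spectral argument, treating $e(B,C)$ as a bilinear form in the characteristic vectors of $B$ and $C$ and isolating the contribution of the trivial eigenvalue $d$. Write $\mathbf{1}_B,\mathbf{1}_C\in\mathbb{R}^n$ for the characteristic vectors of $B$ and $C$, and let $A=A_G$ be the adjacency matrix. Since $(A)_{uw}=1$ exactly when $\overrightarrow{uw}\in E(G)$, one has the identity
\[ e(B,C)=\mathbf{1}_B^{t}A\,\mathbf{1}_C. \]
The whole point will then be to split $A=\frac{d}{n}J+A'$, where $J$ is the all-ones matrix, so that $\frac{d}{n}\mathbf{1}_B^{t}J\mathbf{1}_C=\frac{d}{n}|B||C|$ produces the main term, and it remains only to control $|\mathbf{1}_B^{t}A'\mathbf{1}_C|$ by $\lambda\sqrt{|B||C|}$.

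First I would invoke normality. Because $G$ is a normal digraph, $A$ is a normal matrix, so the spectral theorem for normal operators applies: there is an orthonormal basis $v_1,\dots,v_n$ of $\mathbb{C}^n$ with $Av_i=\lambda_i v_i$. Since $G$ is $d$-regular in both in- and out-degree, the all-ones vector satisfies $A\mathbf{j}=A^{t}\mathbf{j}=d\mathbf{j}$, so I may take $v_1=\mathbf{j}/\sqrt{n}$ with $\lambda_1=d$. Expanding $\mathbf{1}_B=\sum_i\alpha_i v_i$ and $\mathbf{1}_C=\sum_i\beta_i v_i$, orthonormality gives $\alpha_1=\langle v_1,\mathbf{1}_B\rangle=|B|/\sqrt{n}$ and $\beta_1=|C|/\sqrt{n}$, together with the Parseval identities $\sum_i|\alpha_i|^2=|B|$ and $\sum_i|\beta_i|^2=|C|$. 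Using $A\mathbf{1}_C=\sum_i\beta_i\lambda_i v_i$ I then obtain
\[ e(B,C)=\langle\mathbf{1}_B,A\mathbf{1}_C\rangle=\sum_{i=1}^{n}\overline{\alpha_i}\,\beta_i\,\lambda_i, \]
whose $i=1$ term is exactly $\overline{\alpha_1}\beta_1\lambda_1=\tfrac{d}{n}|B||C|$, the anticipated main term.

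It then remains to estimate the tail $\sum_{i\ge 2}\overline{\alpha_i}\beta_i\lambda_i$. For $i\ge 2$ we have $|\lambda_i|\le\lambda(G)\le\lambda$, so by the triangle inequality followed by Cauchy--Schwarz,
\[ \Bigl|e(B,C)-\tfrac{d}{n}|B||C|\Bigr|\le\lambda\sum_{i\ge2}|\alpha_i||\beta_i|\le\lambda\Bigl(\sum_{i\ge2}|\alpha_i|^2\Bigr)^{1/2}\Bigl(\sum_{i\ge2}|\beta_i|^2\Bigr)^{1/2}\le\lambda\sqrt{|B||C|}, \]
the last step discarding the $i=1$ terms from the Parseval sums. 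This is the claimed bound.

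The crux, and the place where the directed setting genuinely differs from the undirected Lemma \ref{edge}, is the appeal to the spectral theorem, which is precisely why normality is imposed: for a general digraph $A$ need not be diagonalizable, and even when it is its eigenvectors need not be orthonormal, so the eigenvalue-based quantity $\lambda(G)$ would fail to control the size of the bilinear form coming from $A'$. Normality repairs exactly this, making the moduli $|\lambda_i|$ coincide with the singular values of $A$ so that $\lambda(G)$ really does bound the relevant operator norm. The one technical point to watch is the step $|\lambda_i|\le\lambda$ for $i\ge2$: it presumes that the only modulus-$d$ eigenvector we need to peel off is $v_1=\mathbf{j}/\sqrt{n}$, i.e. that no further eigenvalue attains modulus $d$, which is the sole spot where separation of the remaining spectrum from $d$ is used.
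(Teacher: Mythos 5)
The paper gives no proof of this lemma at all --- it is quoted directly from Vu \cite{van} --- and your spectral argument (orthonormal eigenbasis via normality, peeling off $v_1=\mathbf{j}/\sqrt{n}$ with eigenvalue $d$, Cauchy--Schwarz on the rest of the spectrum) is exactly the standard proof of this statement and the one underlying the source, so your proposal is correct and matches the source's approach. The caveat you flag at the end is genuine, but it is a defect of the definition $\lambda(G)=\max_{|\lambda_i|\neq d}|\lambda_i|$ rather than of your argument: as literally stated the lemma is false, e.g., for a disjoint union of two complete digraphs, where $d$ occurs as an eigenvalue with multiplicity two and is not controlled by $\lambda$; one must implicitly assume that $d$ is the unique eigenvalue of modulus $d$, which does hold for the graphs to which the paper applies the lemma (the Cayley graphs $C_\V$ and $C_{P'}(\mathbb{F}_q^{2d+1})$, whose nontrivial eigenvalues are explicitly bounded character sums).
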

By using  similar arguments as in the proofs of \cite[Lemma 16]{hanson} and \cite[Lemma 3.1]{van}, we obtain the multiplicity version of Lemma \ref{edge2}.
\begin{lemma}[Multiplicity version]\label{edge-c-sesb}
  Let $G = (V, E)$ be an $(n, d, \lambda)$-digraph. For any two multi-sets $B$  and $C$ of vertices , we have
  \[ \left| e(B, C) - \frac{d}{n}|B | |C| \right| \leq \lambda \sqrt{\sum_{b\in B}m_B(b)^2}\sqrt{\sum_{c\in C}m_C(c)^2}, \]
  where $m_X(x)$ is the multiplicity of $x$ in $X$.
\end{lemma}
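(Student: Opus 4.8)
The plan is to mimic the spectral proof of the undirected expander mixing lemma, but working over $\mathbb{C}$ since the adjacency matrix of a digraph need not be symmetric. Write $A = A_G$ for the adjacency matrix. The crucial point is that $G$ is normal, so $A$ is a normal matrix; by the spectral theorem for normal matrices, $\mathbb{C}^n$ admits an orthonormal basis $v_1, \ldots, v_n$ of eigenvectors of $A$ with respect to the standard Hermitian inner product $\langle x, y\rangle = \sum_i \overline{x_i}\, y_i$, with $A v_j = \lambda_j v_j$. Because each vertex has out-degree $d$, the all-ones vector is an eigenvector with eigenvalue $d$, so I take $\lambda_1 = d$ and $v_1 = n^{-1/2}\mathbf{1}$; the remaining eigenvalues satisfy $|\lambda_j| \le \lambda$ for $j \ge 2$ by the hypothesis $\lambda(G) \le \lambda$.

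Next I encode the two multi-sets as their multiplicity vectors: let $\mathbf{b}, \mathbf{c} \in \mathbb{R}^n$ have entries $\mathbf{b}_x = m_B(x)$ and $\mathbf{c}_x = m_C(x)$. Counting ordered pairs with multiplicity gives the clean identity $e(B,C) = \mathbf{b}^t A \mathbf{c} = \langle \mathbf{b}, A\mathbf{c}\rangle$, the last equality holding because $\mathbf{b}$ is real. Expanding $\mathbf{b} = \sum_j \alpha_j v_j$ and $\mathbf{c} = \sum_j \beta_j v_j$ with $\alpha_j = \langle v_j, \mathbf{b}\rangle$ and $\beta_j = \langle v_j, \mathbf{c}\rangle$, orthonormality yields
\[ e(B,C) = \sum_{j=1}^n \lambda_j\, \overline{\alpha_j}\,\beta_j. \]
The $j=1$ term is the main term: since $\alpha_1 = n^{-1/2}\sum_x m_B(x) = |B|/\sqrt{n}$ and similarly $\beta_1 = |C|/\sqrt{n}$, while $\lambda_1 = d$, it equals $d|B||C|/n$.

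It then remains to bound the tail. Using $|\lambda_j| \le \lambda$ for $j \ge 2$, the Cauchy--Schwarz inequality, and Parseval's identity $\sum_j |\alpha_j|^2 = \|\mathbf{b}\|^2$, I would estimate
\[ \left| e(B,C) - \frac{d|B||C|}{n}\right| = \left|\sum_{j\ge 2}\lambda_j\, \overline{\alpha_j}\,\beta_j\right| \le \lambda \sqrt{\sum_{j\ge 2}|\alpha_j|^2}\,\sqrt{\sum_{j\ge 2}|\beta_j|^2} \le \lambda\, \|\mathbf{b}\|\,\|\mathbf{c}\|. \]
Finally, $\|\mathbf{b}\|^2 = \sum_{x\in V} m_B(x)^2 = \sum_{b\in B} m_B(b)^2$, the last sum being over the distinct elements occurring in $B$, and likewise for $\mathbf{c}$; this is exactly the claimed right-hand side.

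The one genuinely nontrivial ingredient, and the step I expect to carry the weight, is the appeal to normality. For a general digraph the adjacency matrix is not diagonalizable by an orthonormal basis, and the eigenvalue/eigenvector expansion above would break down; normality is precisely what restores the orthonormal eigenbasis and lets the Hermitian inner product separate the $v_1$-component (the main term) from a tail controlled by $\lambda$. The remaining work — the multiplicity bookkeeping and the Cauchy--Schwarz step — is routine and parallels the arguments of \cite{hanson} and \cite{van}.
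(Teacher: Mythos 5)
Your proof is correct and is precisely the argument the paper intends: the paper leaves this lemma to the reader with pointers to \cite[Lemma 3.1]{van} and \cite[Lemma 16]{hanson}, and your proposal --- orthonormal eigenbasis from normality of $A$, multiplicity vectors $\mathbf{b}, \mathbf{c}$, separation of the $n^{-1/2}\mathbf{1}$ component as the main term, and Cauchy--Schwarz plus Parseval on the tail --- is exactly that combination. One caveat, inherited from the paper's definition $\lambda(G)=\max_{|\lambda_i|\ne d}|\lambda_i|$ rather than a flaw of your own: your claim that $|\lambda_j|\le \lambda$ for all $j\ge 2$ tacitly assumes that no eigenvalue other than $\lambda_1=d$ has modulus $d$ (e.g.\ that the digraph is connected), an assumption equally implicit in the paper's Lemma \ref{edge2} and in the statement of Lemma \ref{edge-c-sesb} itself.
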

We leave the proof of Lemma \ref{edge-c-sesb} to the interested reader.

\section{Proof of Theorem \ref{thm1}}
Let $H$ be a finite (additive) abelian group and $S$ be a subset of $H$. Define a directed Cayley graph $C_S$ as follows. The vertex of $C_S$ is $H$. There is a directed edge from $x$ to $y$ if and only if $y-x \in S$. It is clear that every vertex $C_S$ has out-degree $|S|$. Let $\chi_\alpha$, $\alpha \in H$, be the additive charaters of $H$. It is well known that for any $\alpha \in H$, $\sum_{s\in S} \chi_\alpha (s)$ is an eigenvalue of $C_S$, with respect the eigenvector $(\chi_\alpha(x))_{x\in H}$. 

Let $\V$ be a regular variety defined by 
\[\V:=\{\mathbf{x}\in \mathbb{F}_q^d\colon F(\mathbf{x})=0\},\]
for some polynomial $F\in \mathbb{F}_q[x_1, \ldots, x_d]$.

The Cayley graph $C_{\V}$ is defined with $H = \mathbb{F}_q^d$ and $S=\V$. In particular, the edge set of the Cayley graph $C_{\V}$ is defined by
\[E(C_{\V})=\{\overrightarrow{(\tmmathbf{x},\tmmathbf{y})} \in H \times H \colon \mathbf{y}-\mathbf{x}\in \V\}.\]

For any two vertices $\mathbf{x}$ and $\mathbf{y}$ in $H$, we have 
\[|N^+(\mathbf{x}, \mathbf{y})|=|N^-(\mathbf{x}, \mathbf{y})|=|(\mathbf{x}+\V)\cap (\mathbf{y}+\V)|,\]
which implies that $C_\V$ is normal. We now study the $(n, d, \lambda)$ form of this digraph in the next theorem.
\begin{theorem}\label{aaa}
The Cayley graph $C_\V$ is a $(q^d,|\V|, cq^{(d-1)/2})$-digraph for some positive constant $c$.
\end{theorem}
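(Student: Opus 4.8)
The plan is to diagonalize the adjacency matrix of $C_\V$ by the additive characters of $H=\mathbb{F}_q^d$ and then read the second eigenvalue directly off the Fourier-decay hypothesis built into the definition of a regular variety. The parameters $n=q^d$ and $d=|\V|$ (the out-degree) are immediate from the construction, and the normality of $C_\V$ has already been verified above through the identity $|N^+(\mathbf{x},\mathbf{y})|=|N^-(\mathbf{x},\mathbf{y})|$, so everything reduces to bounding $\lambda(C_\V)$. Fixing a nontrivial additive character $\chi$ of $\mathbb{F}_q$, every additive character of $\mathbb{F}_q^d$ is of the form $\chi_{\mathbf{m}}(\mathbf{x})=\chi(\mathbf{m}\cdot\mathbf{x})$ with $\mathbf{m}\in\mathbb{F}_q^d$, and these $q^d$ characters form an orthogonal eigenbasis. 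By the fact recalled at the start of this section, the complete list of eigenvalues of $C_\V$ is therefore
\[\lambda_{\mathbf{m}}=\sum_{\mathbf{s}\in\V}\chi(\mathbf{m}\cdot\mathbf{s}),\qquad \mathbf{m}\in\mathbb{F}_q^d,\]
with $\lambda_{\mathbf{0}}=|\V|$ the trivial eigenvalue corresponding to the degree.

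The key step is to recognize each nontrivial $\lambda_{\mathbf{m}}$ as a Fourier coefficient of $\mathbf{1}_\V$. Writing the sum over all of $\mathbb{F}_q^d$ against the indicator, one has $\lambda_{\mathbf{m}}=q^d\,\widehat{\mathbf{1}_\V(-\mathbf{m})}$ directly from the definition of $\widehat{\mathbf{1}_\V}$. For $\mathbf{m}\neq\mathbf{0}$ we have $-\mathbf{m}\neq\mathbf{0}$, so the regular-variety bound $\widehat{\mathbf{1}_\V(\mathbf{m})}\ll q^{-(d+1)/2}$ applies and gives
\[|\lambda_{\mathbf{m}}|\ll q^d\cdot q^{-(d+1)/2}=q^{(d-1)/2}.\]
Thus there is a constant $c>0$ with $|\lambda_{\mathbf{m}}|\le c\,q^{(d-1)/2}$ for every $\mathbf{m}\neq\mathbf{0}$.

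To finish, note that $c\,q^{(d-1)/2}=o(q^{d-1})=o(|\V|)$ because $|\V|\asymp q^{d-1}$; hence for $q$ large the only eigenvalue of modulus $|\V|$ is $\lambda_{\mathbf{0}}$, and $\lambda(C_\V)=\max_{\mathbf{m}\neq\mathbf{0}}|\lambda_{\mathbf{m}}|\le c\,q^{(d-1)/2}$, which is precisely the asserted $(q^d,|\V|,c\,q^{(d-1)/2})$-digraph structure. I do not expect a genuine obstacle in this argument; the only points needing care are the sign bookkeeping in the identity $\lambda_{\mathbf{m}}=q^d\widehat{\mathbf{1}_\V(-\mathbf{m})}$ and confirming that the bound $c\,q^{(d-1)/2}$ sits strictly below the degree $|\V|$, so that the trivial eigenvalue is not inadvertently counted toward $\lambda(C_\V)$.
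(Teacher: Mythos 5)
Your proposal is correct and follows essentially the same route as the paper: both diagonalize the Cayley digraph by the additive characters $\chi_{\mathbf{m}}$, identify each nontrivial eigenvalue as $q^d\,\widehat{\mathbf{1}_\V(-\mathbf{m})}$, and invoke the Fourier-decay condition in the definition of a regular variety to bound it by $cq^{(d-1)/2}$. Your closing remark that $cq^{(d-1)/2}=o(|\V|)$, ensuring the trivial eigenvalue is not counted toward $\lambda(C_\V)$, is a small point of care that the paper leaves implicit.
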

\begin{proof}
It is clear that $C_\V$ has $q^{d}$ vertices and the in-degree  and out-degree of each vertex are both $|\V|$. Next, we will estimate eigenvalues of $C_\V$. The exponentials (or characters of the additive group $\mathbb{F}_q^{d}$)
\begin{equation}
\chi_{\tmmathbf{m}}(\mathbf{x}) = \chi(\mathbf{x} \cdot \mathbf{m}), 
\end{equation}
for $\mathbf{x}, \mathbf{m} \in \mathbb{F}_q^d$, are eigenfunctions of the adjacency operator for the graph $C_{\V}$ corresponding to the eigenvalue
\begin{eqnarray*}
\lambda_{\mathbf{m}} & = & \sum_{\mathbf{x} \in \mathcal{V}} \chi_{\mathbf{m}}(\mathbf{x})\\
& = & \sum_{\mathbf{x}\in \V} \chi(\mathbf{x} \cdot \mathbf{m})\\
&=&q^d \widehat{\mathbf{1}_{\V}(-\mathbf{m})}\\
&\ll & q^{(d-1)/2},
\end{eqnarray*}
when $\mathbf{m}\ne \mathbf{0}$. If $\mathbf{m}=\mathbf{0}$, then $\lambda_{\mathbf{0}}=|\V|$, which is the
 largest eigenvalue of $C_\V$.  In other words,  $C_\V$ is a $(q^{d}, |\V|, cq^{(d-1)/2} )$-digraph for some positive constant $c$.
\end{proof}

In order to prove Theorem \ref{thm1}, we need the following notations.

For an even integer $k=2m \ge 2$ and $\E\subset \mathbb F_q^d,$   the $k$-energy is defined by
\[ \Lambda_k(\E)= \left\vert \left\lbrace (\mathbf{x}^1, \ldots, \mathbf{x}^k)\in \E^k\colon \mathbf{x}^1+\cdots+\mathbf{x}^m=\mathbf{x}^{m+1}+\cdots+\mathbf{x}^{k}\right\rbrace\right\vert.\]

For $\E\subseteq\mathbb{F}_q^d$, we define
\[\nu_k(t)=\left\vert \left\lbrace (\mathbf{x}^1, \ldots, \mathbf{x}^k)\in \E^k\colon Q(\mathbf{x}^1+\cdots+\mathbf{x}^k)=t\right\rbrace\right\vert.\]

In our next lemmas, we give  estimates on the magnitude of $\nu_k(t)$.
\begin{lemma}\label{lm872016}
For $\E\subset \mathbb{F}_q^d$ and $k\ge 2$ even, we have
\[\left\vert \nu_k(t)-(1+o(1))\frac{|\E|^k}{q}\right\vert\le q^{(d-1)/2}\Lambda_k(\E)\].
\end{lemma}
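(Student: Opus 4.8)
The plan is to interpret $\nu_k(t)$ as a multiplicity-weighted edge count in the finite Euclidean graph $E_q(d,Q,t)$ and then feed this into the expander mixing lemma. Write $k=2m$. For a tuple $(\mathbf{x}^1,\ldots,\mathbf{x}^k)\in\E^k$ put $\mathbf{u}=\mathbf{x}^1+\cdots+\mathbf{x}^m$ and $\mathbf{v}=-(\mathbf{x}^{m+1}+\cdots+\mathbf{x}^k)$, so that $Q(\mathbf{x}^1+\cdots+\mathbf{x}^k)=Q(\mathbf{u}-\mathbf{v})$. I would record the first $m$ coordinates as a multiset $B$ on $\mathbb{F}_q^d$ whose element $\mathbf{u}$ carries multiplicity $m_B(\mathbf{u})$ equal to the number of representations $\mathbf{u}=\mathbf{x}^1+\cdots+\mathbf{x}^m$ with each $\mathbf{x}^i\in\E$, and the negated last $m$ coordinates as a multiset $C$ with multiplicities $m_C(\mathbf{v})$. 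Both satisfy $|B|=|C|=|\E|^m$, and $C$ is the coordinatewise negation of $B$.

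First I would observe that since $t\neq 0$ and $Q(\mathbf{0})=0$, the relation $Q(\mathbf{u}-\mathbf{v})=t$ forces $\mathbf{u}\neq\mathbf{v}$, so every contributing tuple yields a genuine edge of $E_q(d,Q,t)$ and no diagonal terms are lost. Summing over such edges with the correct multiplicities gives exactly
\[\nu_k(t)=e(B,C),\]
the multiplicity-weighted number of edges between $B$ and $C$ in $E_q(d,Q,t)$. Note that this identification uses nothing about the variety $\V$, consistent with the lemma holding for an arbitrary $\E\subset\mathbb{F}_q^d$.

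Next I would apply the multiplicity version of the expander mixing lemma (Lemma~\ref{expander}) to $E_q(d,Q,t)$, which by Theorem~\ref{euclidean graphs} is a $(q^d,(1+o(1))q^{d-1},2q^{(d-1)/2})$-graph. The main term becomes
\[\frac{(1+o(1))q^{d-1}}{q^d}\,|B|\,|C|=(1+o(1))\frac{|\E|^{2m}}{q}=(1+o(1))\frac{|\E|^k}{q},\]
which is the claimed central value. For the error term the key bookkeeping step is that $\sum_{\mathbf{u}}m_B(\mathbf{u})^2$ counts pairs of $m$-tuples from $\E$ with equal sums, i.e.
\[\sum_{\mathbf{u}}m_B(\mathbf{u})^2=\left\vert\left\lbrace(\mathbf{x}^1,\ldots,\mathbf{x}^m,\mathbf{y}^1,\ldots,\mathbf{y}^m)\in\E^{k}\colon \mathbf{x}^1+\cdots+\mathbf{x}^m=\mathbf{y}^1+\cdots+\mathbf{y}^m\right\rbrace\right\vert=\Lambda_k(\E),\]
and the same value holds for $C$ by the negation symmetry. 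Hence Lemma~\ref{expander} bounds the error by $2q^{(d-1)/2}\sqrt{\Lambda_k(\E)}\sqrt{\Lambda_k(\E)}=2q^{(d-1)/2}\Lambda_k(\E)$, which gives the stated estimate (up to the constant factor coming from $\lambda$, which is harmless under the $\ll$-conventions of the paper).

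The only genuinely delicate point is the two counting identifications: that $\nu_k(t)$ coincides exactly with $e(B,C)$ with no over- or under-counting, and that the second moment $\sum_{\mathbf{u}}m_B(\mathbf{u})^2$ is precisely the energy $\Lambda_k(\E)$. Everything else is a direct substitution of the Euclidean-graph parameters from Theorem~\ref{euclidean graphs} into Lemma~\ref{expander}. There is no analytic obstacle, since the non-degeneracy of $Q$ together with $t\neq 0$ automatically excludes the diagonal $\mathbf{u}=\mathbf{v}$, so the edge-count interpretation is exact rather than approximate.
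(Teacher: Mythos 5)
Your proposal is correct and follows essentially the same route as the paper: the same two multisets of $m$-fold sums (one negated), the same identification of $\nu_k(t)$ with the multiplicity-weighted edge count in $E_q(d,Q,t)$, and the same application of Lemma~\ref{expander} together with Theorem~\ref{euclidean graphs}. Your additional care about the diagonal (excluded since $t\neq 0$) and the explicit verification that the second moments equal $\Lambda_k(\E)$ are details the paper leaves to the reader, and your remark about the constant factor $2$ in the error term is a fair observation about the paper's statement.
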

\begin{proof}
Suppose that $k=2m$. Let $\A$ and $\B$ be  multi-sets of points in $\mathbb{F}_q^d$ defined as follows
\[\A=\{\mathbf{x}_1+\cdots+\mathbf{x}_m\colon \mathbf{x}_i\in \E, 1\le i\le m\}, \quad \B=\{-\mathbf{x}_{m+1}-\cdots-\mathbf{x}_{k}\colon \mathbf{x}_i\in \E, m+1\le i\le k\}.\]
It is easy to check that 
\[\sum_{\mathbf{a}\in \A}m_\A(\mathbf{a})^2=\Lambda_k(\E), \quad \sum_{\mathbf{b}\in \B}m_\B(\mathbf{b})^2=\Lambda_k(\E),\]
and $\nu_k(t)$ is equal to the number of edges between $\A$ and $\B$ in the graph $E_q(d, Q, t)$. Thus the lemma follows immediately from Lemma \ref{expander} and Theorem \ref{euclidean graphs}.
\end{proof}

By using the same techniques, we get a similar result for the case $k$ odd.
\begin{lemma}\label{8720162}
For $\E\subset \mathbb{F}_q^d$ and $k\ge 3$ odd, we have
\[\left\vert \nu_k(t)-(1+o(1))\frac{|\E|^k}{q}\right\vert\le 2q^{(d-1)/2}\left(\Lambda_{k-1}(\E)\right)^{1/2}\left(\Lambda_{k+1}(\E)\right)^{1/2}.\]
\end{lemma}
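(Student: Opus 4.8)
The plan is to mirror the even-case argument of Lemma~\ref{lm872016}, the only difference being that an odd $k$ cannot be split into two halves of equal size, so the two multi-sets feeding the expander mixing lemma will have different cardinalities and, correspondingly, different $L^2$-masses. Write $k = 2m+1$ with $m \ge 1$, which is legitimate since $k \ge 3$ is odd. First I would introduce the two multi-sets of points in $\mathbb{F}_q^d$
\[\A = \{\mathbf{x}_1 + \cdots + \mathbf{x}_m \colon \mathbf{x}_i \in \E,\ 1 \le i \le m\}, \quad \B = \{-\mathbf{x}_{m+1} - \cdots - \mathbf{x}_k \colon \mathbf{x}_i \in \E,\ m+1 \le i \le k\},\]
so that $|\A| = |\E|^m$ and $|\B| = |\E|^{m+1}$, the latter because $\B$ is built from the $k - m = m+1$ remaining copies of $\E$.

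The key observation, exactly as in the even case, is that for $\mathbf{a} \in \A$ and $\mathbf{b} \in \B$ one has $\mathbf{a} - \mathbf{b} = \mathbf{x}_1 + \cdots + \mathbf{x}_k$, so the condition $Q(\mathbf{a} - \mathbf{b}) = t$ defining an edge of the finite Euclidean graph $E_q(d, Q, t)$ is precisely the condition $Q(\mathbf{x}_1 + \cdots + \mathbf{x}_k) = t$ counted by $\nu_k(t)$. Since $t \ne 0$ while $Q(\mathbf{0}) = 0$, every such edge automatically satisfies $\mathbf{a} \ne \mathbf{b}$, so the exclusion of loops in the graph costs nothing; hence $\nu_k(t) = e(\A, \B)$ as a count of ordered pairs, multiplicities included.

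I would then record the two second-moment identities
\[\sum_{\mathbf{a} \in \A} m_\A(\mathbf{a})^2 = \Lambda_{2m}(\E) = \Lambda_{k-1}(\E), \qquad \sum_{\mathbf{b} \in \B} m_\B(\mathbf{b})^2 = \Lambda_{2m+2}(\E) = \Lambda_{k+1}(\E),\]
which hold because the multiplicity of a vector $\mathbf{v}$ in $\A$ equals the number of ordered $m$-tuples from $\E$ summing to $\mathbf{v}$, so summing the square over $\mathbf{v}$ counts pairs of such $m$-tuples with a common sum, i.e. exactly $\Lambda_{2m}(\E)$; the identical reasoning with $m+1$ in place of $m$ yields the $\B$ identity. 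Applying the multiplicity version of the expander mixing lemma (Lemma~\ref{expander}) together with Theorem~\ref{euclidean graphs}, which gives that $E_q(d,Q,t)$ is a $(q^d, (1+o(1))q^{d-1}, 2q^{(d-1)/2})$-graph, the main term is $\tfrac{(1+o(1))q^{d-1}}{q^d}|\A||\B| = (1+o(1))\tfrac{|\E|^{2m+1}}{q} = (1+o(1))\tfrac{|\E|^{k}}{q}$, and the error term is $2q^{(d-1)/2}(\Lambda_{k-1}(\E))^{1/2}(\Lambda_{k+1}(\E))^{1/2}$, which is exactly the claimed inequality of Lemma~\ref{8720162}.

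There is no genuine obstacle in the odd case beyond bookkeeping: the single point requiring care is that the two multi-sets now have unequal sizes, so the square-root factors produced by the expander mixing lemma must be matched to the two distinct energies $\Lambda_{k-1}(\E)$ and $\Lambda_{k+1}(\E)$ rather than to one repeated $\Lambda_k(\E)$, which is the sole structural reason the odd bound takes a geometric-mean form. Every other step is a verbatim transcription of the proof of Lemma~\ref{lm872016}.
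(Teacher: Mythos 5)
Your proposal is correct and is exactly the argument the paper intends: the paper proves the even case (Lemma \ref{lm872016}) with this multi-set expander-mixing argument and dismisses the odd case with ``by using the same techniques,'' and your unequal split into $m$ and $m+1$ summands, with second moments $\Lambda_{k-1}(\E)$ and $\Lambda_{k+1}(\E)$, is precisely how that technique carries over. Your observation that loops cause no loss since $t\ne 0$ forces $\mathbf{a}\ne\mathbf{b}$ is a detail the paper glosses over even in the even case, and it is handled correctly.
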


Combining Lemmas \ref{lm872016} and \ref{8720162} leads to the following theorem.
\begin{theorem}\label{thmchinh2}
Let $\E$ be a set in $\mathbb{F}_q^d$. Then we have 
\begin{enumerate}
\item If $q^{\frac{d+1}{2}}\Lambda_k(\E)=o(|\E|^k)$ and $k$ is even, then 
\[\left\vert \left\lbrace (\mathbf{x}_1, \ldots, \mathbf{x}_k)\in \E^k\colon Q(\mathbf{x}_1+\cdots+\mathbf{x}_k)=t\right\rbrace\right\vert =(1+o(1))\frac{|\E|^k}{q}.\]

\item If $q^{\frac{d+1}{2}}(\Lambda_{k-1}(\E))^{1/2}(\Lambda_{k+1}(\E))^{1/2}=o(|\E|^k)$ and $k$ is odd, then 
\[\left\vert \left\lbrace (\mathbf{x}_1, \ldots, \mathbf{x}_k)\in \E^k\colon Q(\mathbf{x}_1+\cdots+\mathbf{x}_k)=t\right\rbrace\right\vert =(1+o(1))\frac{|\E|^k}{q}.\]
\end{enumerate}
\end{theorem}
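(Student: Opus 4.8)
The plan is to read off both parts directly from Lemmas~\ref{lm872016} and~\ref{8720162}. Observe first that the cardinality appearing in the theorem is exactly $\nu_k(t)$ as defined above. Thus in each case it suffices to show that the error term bounding $\left|\nu_k(t)-(1+o(1))|\E|^k/q\right|$ in the relevant lemma is itself $o(|\E|^k/q)$, whence it can be absorbed into the main term and we obtain $\nu_k(t)=(1+o(1))|\E|^k/q$.

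For the even case I would invoke Lemma~\ref{lm872016}, which yields the bound
\[\left\vert \nu_k(t)-(1+o(1))\frac{|\E|^k}{q}\right\vert\le q^{(d-1)/2}\Lambda_k(\E).\]
The hypothesis $q^{(d+1)/2}\Lambda_k(\E)=o(|\E|^k)$ is equivalent, upon dividing both sides by $q$, to $q^{(d-1)/2}\Lambda_k(\E)=o(|\E|^k/q)$. Hence the right-hand side above is $o(|\E|^k/q)$, and the claim follows.

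For the odd case I would instead apply Lemma~\ref{8720162}, giving
\[\left\vert \nu_k(t)-(1+o(1))\frac{|\E|^k}{q}\right\vert\le 2q^{(d-1)/2}\left(\Lambda_{k-1}(\E)\right)^{1/2}\left(\Lambda_{k+1}(\E)\right)^{1/2}.\]
Again, dividing the hypothesis $q^{(d+1)/2}(\Lambda_{k-1}(\E))^{1/2}(\Lambda_{k+1}(\E))^{1/2}=o(|\E|^k)$ by $q$ shows that the right-hand side is $o(|\E|^k/q)$, and the conclusion follows as before.

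The only point requiring any care is the bookkeeping of the single power of $q$: the two lemmas control the deviation of $\nu_k(t)$ by a quantity carrying $q^{(d-1)/2}$, while the hypotheses are stated with $q^{(d+1)/2}$, and the extra factor of $q$ is precisely the denominator of the main term $|\E|^k/q$ against which the error must be measured. There is no substantive obstacle here; the theorem is simply the common conclusion of the two lemmas once their error terms are matched against the stated thresholds.
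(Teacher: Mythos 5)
Your proposal is correct and matches the paper's own treatment: the paper derives Theorem \ref{thmchinh2} by exactly this combination of Lemmas \ref{lm872016} and \ref{8720162}, with the error terms $q^{(d-1)/2}\Lambda_k(\E)$ and $2q^{(d-1)/2}(\Lambda_{k-1}(\E))^{1/2}(\Lambda_{k+1}(\E))^{1/2}$ absorbed into the main term $|\E|^k/q$ under the stated hypotheses. Your bookkeeping of the extra factor of $q$ is precisely the (unwritten) content of the paper's one-line deduction.
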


Theorem \ref{thmchinh2} implies that in order to prove Theorem \ref{thm1}, it is sufficient to bound $\Lambda_k(\E)$.
\begin{lemma}\label{hai}
For a regular variety $\V\subset \mathbb{F}_q^d$. If $k\ge 4$ is even, and $\E\subset \V$, we have 
\[\left\vert \Lambda_k(\E)- (1+o(1))\frac{|\E|^{k-1}}{q}\right\vert\ll q^{(d-1)/2}(\Lambda_{k-2}(\E))^{1/2}(\Lambda_{k}(\E))^{1/2}.\]
\end{lemma}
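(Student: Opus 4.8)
The plan is to realize the energy $\Lambda_k(\E)$ as an edge count in the Cayley digraph $C_\V$ of Theorem \ref{aaa} and to feed it into the multiplicity version of the expander mixing lemma, Lemma \ref{edge-c-sesb}. Write $k=2m$, and for $\mathbf{s}\in\mathbb{F}_q^d$ and $1\le j\le m$ let $r_j(\mathbf{s})$ denote the number of $j$-tuples in $\E^j$ summing to $\mathbf{s}$, so that $\Lambda_{2j}(\E)=\sum_{\mathbf{s}}r_j(\mathbf{s})^2$. I would introduce the two vertex multisets
\[\A=\{\mathbf{x}^1+\cdots+\mathbf{x}^{m-1}\colon \mathbf{x}^i\in\E\},\qquad \B=\{\mathbf{y}^1+\cdots+\mathbf{y}^m\colon \mathbf{y}^i\in\E\},\]
in which $\mathbf{a}$ occurs with multiplicity $r_{m-1}(\mathbf{a})$ and $\mathbf{b}$ with multiplicity $r_m(\mathbf{b})$. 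Then $|\A|=|\E|^{m-1}$, $|\B|=|\E|^m$, and by definition of the $r_j$ one has the two identities
\[\sum_{\mathbf{a}\in\A}m_\A(\mathbf{a})^2=\Lambda_{k-2}(\E),\qquad \sum_{\mathbf{b}\in\B}m_\B(\mathbf{b})^2=\Lambda_{k}(\E),\]
which already explain why $\Lambda_{k-2}$ and $\Lambda_k$ are exactly the quantities that should appear in the error term.

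Next I would apply Lemma \ref{edge-c-sesb} to $\A$ and $\B$ in the graph $C_\V$, which by Theorem \ref{aaa} is a $(q^d,|\V|,cq^{(d-1)/2})$-digraph. This produces the genuinely two-sided estimate
\[\left|e(\A,\B)-\frac{|\V|}{q^d}\,|\E|^{k-1}\right|\le cq^{(d-1)/2}\sqrt{\Lambda_{k-2}(\E)}\,\sqrt{\Lambda_{k}(\E)},\]
and, since $\V$ is regular with $|\V|=(1+o(1))q^{d-1}$, the main term is $(1+o(1))|\E|^{k-1}/q$. The right-hand side is precisely the bound claimed in the lemma, so the entire problem is reduced to comparing the edge count $e(\A,\B)$ with the energy $\Lambda_k(\E)$.

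This comparison is the heart of the matter. Setting $T(\mathbf{v})=\sum_{\mathbf{a}}r_{m-1}(\mathbf{a})\,r_m(\mathbf{a}+\mathbf{v})$, one checks directly that $e(\A,\B)=\sum_{\mathbf{v}\in\V}T(\mathbf{v})$ (an edge of $C_\V$ from $\mathbf{a}$ to $\mathbf{b}$ means $\mathbf{b}-\mathbf{a}\in\V$), whereas isolating the last summand in the equation $\mathbf{x}^1+\cdots+\mathbf{x}^m=\mathbf{y}^1+\cdots+\mathbf{y}^m$ shows that $\Lambda_k(\E)=\sum_{\mathbf{v}\in\E}T(\mathbf{v})$: the defining relation forces the connecting difference $\mathbf{b}-\mathbf{a}$ between the $m$-fold sum $\mathbf{b}\in\B$ and the $(m-1)$-fold sum $\mathbf{a}\in\A$ to lie in $\E$, not merely in $\V$. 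Because $\E\subseteq\V$ and $T\ge 0$, the inequality $\Lambda_k(\E)\le e(\A,\B)$ is immediate, which combined with the display above yields the upper half of the estimate for all $\E$.

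The lower half is where I expect the main obstacle to lie. One must control the surplus $e(\A,\B)-\Lambda_k(\E)=\sum_{\mathbf{v}\in\V\setminus\E}T(\mathbf{v})$ coming from differences that land in $\V\setminus\E$; equivalently, one would like to rerun the argument with $C_\V$ replaced by the Cayley digraph $C_\E$, whose adjacency encodes exactly the relation defining $\Lambda_k$. The difficulty is that the spectral gap of $C_\E$ is governed by $\widehat{\mathbf{1}_\E}$ rather than by $\widehat{\mathbf{1}_\V}$, and the regularity hypothesis placed on $\V$ gives no handle on it, so the surplus cannot be dismissed by appealing to Theorem \ref{aaa} alone. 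The natural routes to close the gap are either a complementary edge count on the digraph $C_{\V\setminus\E}$, whose degree is $q^d-|\V|$, or a direct second-moment bound on $T$ restricted to $\V\setminus\E$; pinning down that the surplus does not exceed the error term $cq^{(d-1)/2}\sqrt{\Lambda_{k-2}\Lambda_k}$ is the delicate step that the proof must ultimately carry out, and it is here that the bulk of the work is concentrated.
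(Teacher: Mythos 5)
Your argument is, up to relabeling and a sign convention, exactly the paper's proof. The paper forms the multisets of $(k/2)$-fold and $(k/2-1)$-fold sums of $\E$, observes that their multiplicity second moments are $\Lambda_k(\E)$ and $\Lambda_{k-2}(\E)$, and applies Lemma \ref{edge-c-sesb} together with Theorem \ref{aaa} to the edge count in $C_\V$ --- your steps one and two verbatim. The bridge between $\Lambda_k(\E)$ and the edge count is the same one-sided observation you make: since the forced $k$-th point of a solution lies in $\E\subseteq\V$, one has $\Lambda_k(\E)\le\sum_{\mathbf{x}_1,\ldots,\mathbf{x}_{k-1}\in\E}\mathbf{1}_\V(\mathbf{x}_1+\cdots+\mathbf{x}_{k/2}-\mathbf{x}_{k/2+1}-\cdots-\mathbf{x}_{k-1})=e(\A,\B)$.

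The ``delicate step'' you flag for the lower half is not carried out in the paper at all. After displaying precisely this inequality, the paper's proof asserts that $\Lambda_k(\E)$ \emph{is equal to} the number of edges between $\A$ and $\B$, which is false in general: equality would require the connecting difference to lie in $\E$ whenever it lies in $\V$, i.e., the surplus $\sum_{\mathbf{v}\in\V\setminus\E}T(\mathbf{v})$ you isolated is silently discarded. Nor can the gap be closed, because the two-sided statement is simply an overstatement. For a generic (random) subset $\E\subset\V$ one expects $\Lambda_k(\E)\asymp |\E|^k/q^d+|\E|^{k/2}$, and in the regime $q^{\frac{d-1}{2}+\frac{1}{k-1}}\ll|\E|\ll q^{d-2}$ (nonempty for $d\ge 4$, $k\ge 4$) this is $o\bigl(|\E|^{k-1}/q\bigr)$, while the claimed error $q^{(d-1)/2}(\Lambda_{k-2}(\E))^{1/2}(\Lambda_k(\E))^{1/2}$ is also $o\bigl(|\E|^{k-1}/q\bigr)$ there; the left-hand side of the lemma is then of order $|\E|^{k-1}/q$ and exceeds the right-hand side. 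The lemma should be read as the one-sided bound $\Lambda_k(\E)\ll |\E|^{k-1}/q+q^{(d-1)/2}(\Lambda_{k-2}(\E))^{1/2}(\Lambda_k(\E))^{1/2}$, which is the only form invoked afterwards (inequality (\ref{317}), Theorem \ref{coco872016}, and hence Theorems \ref{thm1} and \ref{thm33}), and it is exactly what your argument establishes; you need not hunt for the missing lower-bound step. (A minor additional point: regularity gives only $|\V|\asymp q^{d-1}$, not $|\V|=(1+o(1))q^{d-1}$, so even the main term should properly be written $\asymp |\E|^{k-1}/q$ --- again harmless for the one-sided application.)
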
    
\begin{proof}
Since $\E$ is a subset in the variety $\V$, we have the following estimate
\[ \Lambda_k(\E)\le \sum_{\mathbf{x}_1, \ldots, \mathbf{x}_{k-1}\in \E}\mathbf{1}_\V(\mathbf{x}_1+\cdots+\mathbf{x}_{k/2}-\mathbf{x}_{k/2+1}-\cdots-\mathbf{x}_{k-1}).\]

Let  $\A$ and $\B$ be two multi-sets defined by
\[\A:=\{\mathbf{x}_1+\cdots+\mathbf{x}_{k/2}\colon \mathbf{x}_i\in \E, 1\le i\le k/2\},\] and \[\B:=\{-\mathbf{x}_{k/2+1}-\cdots-\mathbf{x}_{k-1}\colon \mathbf{x}_i\in \E, k/2+1\le i\le k-1\}.\]
It is clear that 
\[\sum_{\mathbf{a}\in \A}m_\A\mathbf(a)^2=\Lambda_k(\E), \quad \sum_{\mathbf{b}\in \B}m_\B(\mathbf{b})^2=\Lambda_{k-2}(\E).\]
On the other hand, $\Lambda_k(\E)$ is equal to the number of edges between $\A$ and $\B$ in the Cayley graph $C_\V$. Thus the lemma follows from Lemmas \ref{edge-c-sesb} and \ref{aaa}.
\end{proof}
For $\E\subseteq \V$ and $k\ge 4$ even, it follows from Lemma \ref{hai} that 
\[\Lambda_k(\E)\ll \frac{|\E|^{k-1}}{q}+q^{(d-1)/2}(\Lambda_{k-2}(\E))^{1/2}(\Lambda_{k}(\E))^{1/2}.\]
Solving this inequality in terms of $\Lambda_k(\E)$ gives us 

\[\Lambda_k(\E)\ll q^{d-1}\Lambda_{k-2}(\E)+\frac{|\E|^{k-1}}{q}.\]
Using inductive arguments, we obtain the following estimate for $\E\subseteq \V$ and $k\ge 4$ even

\begin{equation}\label{317}\Lambda_k(\E)\ll q^{\frac{(d-1)(k-2)}{2}}\Lambda_2(\E)+\frac{|\E|^{k-1}}{q}\sum_{j=0}^{(k-4)/2}\left(\frac{q^{d-1}}{|\E|^2}\right)^j.\end{equation}

If we assume that $|\E|>q^{(d-1)/2}$, then the inequality (\ref{317}) implies the following theorem.
\begin{theorem}\label{coco872016}
Let $\E$ be a subset of a regular variety $\V$ in $\mathbb{F}_q^d$ with $|\E|>q^{(d-1)/2}$. 
\begin{enumerate}
\item If $k\ge 2$ is even, then 
\[\Lambda_k(\E)\ll q^{\frac{(d-1)(k-2)}{2}}|\E|+\frac{|\E|^{k-1}}{q}.\]
\item If $k\ge 3$ is odd, then 
\[\Lambda_{k-1}(\E)\Lambda_{k+1}(\E)\ll q^{(d-1)(k-2)}|\E|^2+q^{\frac{(d-1)(k-3)-2}{2}}|\E|^{k+1}+\frac{|\E|^{2k-2}}{q^2}.\]
\end{enumerate}
\end{theorem}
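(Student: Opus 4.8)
The plan is to read both parts off the recursive inequality (\ref{317}) together with the single hypothesis $|\E|>q^{(d-1)/2}$, which enters precisely to make the ratio $q^{d-1}/|\E|^2$ strictly less than $1$. For the even case I would begin from (\ref{317}) and first evaluate its base quantity: by definition $\Lambda_2(\E)$ counts pairs $(\mathbf{x}^1,\mathbf{x}^2)\in\E^2$ with $\mathbf{x}^1=\mathbf{x}^2$, so $\Lambda_2(\E)=|\E|$, and the leading term of (\ref{317}) becomes $q^{\frac{(d-1)(k-2)}{2}}|\E|$. Since $|\E|>q^{(d-1)/2}$ gives $q^{d-1}/|\E|^2<1$, every summand of $\sum_{j=0}^{(k-4)/2}(q^{d-1}/|\E|^2)^j$ is at most $1$; as $k$ is fixed the sum is $O(1)$, so the second term is $\ll|\E|^{k-1}/q$. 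Adding the two contributions gives the bound for $k\ge4$ even, and the case $k=2$ is immediate since there $\Lambda_2(\E)=|\E|\ll|\E|+|\E|/q$.

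For the odd case I would bootstrap from part $(1)$. As $k$ is odd, both $k-1$ and $k+1$ are even, so part $(1)$ yields
\[\Lambda_{k-1}(\E)\ll q^{\frac{(d-1)(k-3)}{2}}|\E|+\frac{|\E|^{k-2}}{q},\qquad \Lambda_{k+1}(\E)\ll q^{\frac{(d-1)(k-1)}{2}}|\E|+\frac{|\E|^{k}}{q}.\]
Multiplying and expanding produces four terms: the two diagonal products give $q^{(d-1)(k-2)}|\E|^2$ and $|\E|^{2k-2}/q^2$, while the cross products give $q^{\frac{(d-1)(k-3)-2}{2}}|\E|^{k+1}$ and $q^{\frac{(d-1)(k-1)}{2}-1}|\E|^{k-1}$. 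Three of these coincide with the terms in the statement, so the only thing left is to dispose of the extraneous cross term.

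This is the one genuine point, and it is where the hypothesis is used a second time: the ratio of the two cross products is
\[\frac{q^{\frac{(d-1)(k-1)}{2}-1}|\E|^{k-1}}{q^{\frac{(d-1)(k-3)-2}{2}}|\E|^{k+1}}=\frac{q^{d-1}}{|\E|^2}<1,\]
so $q^{\frac{(d-1)(k-1)}{2}-1}|\E|^{k-1}$ is dominated by $q^{\frac{(d-1)(k-3)-2}{2}}|\E|^{k+1}$ and may be dropped, which is exactly part $(2)$. No step is technically hard; the only care required is the exponent bookkeeping in the product and the twofold use of $q^{d-1}/|\E|^2<1$, first to sum the geometric series in the even case and then to suppress the leftover cross term in the odd case.
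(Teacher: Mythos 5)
Your proposal is correct and follows exactly the paper's route: part (1) is read off from (\ref{317}) using $\Lambda_2(\E)=|\E|$ and the fact that $|\E|>q^{(d-1)/2}$ makes the geometric sum $O(1)$, and part (2) is deduced from part (1) applied to the even integers $k-1$ and $k+1$. The paper states the second step in one line ("the second is a consequence of the first one"); your expansion of the product and the verification that the leftover cross term $q^{\frac{(d-1)(k-1)}{2}-1}|\E|^{k-1}$ is dominated, again via $q^{d-1}/|\E|^2<1$, is precisely the bookkeeping that line leaves implicit.
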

Note that the first statement of Theorem \ref{coco872016} follows from (\ref{317}) with
the facts that $\Lambda_2(\E)=|\E|$ and $\frac{q^{d-1}}{|\E|^2}<1$, and the second is a consequence of the first one.

We are now ready to prove Theorem \ref{thm1}.

\begin{proof}[Proof of Theorem \ref{thm1}]
We now consider two following cases:

{\bf Case 1:} If $k\ge 2$ is even and $q^{\frac{d-1}{2}+\frac{1}{k-1}}=o(|\E|)$, then it follows from Theorem \ref{coco872016} that \[q^{\frac{d+1}{2}}\Lambda_k(\E)=o(|\E|^k).\]

{\bf Case 2:} If $k\ge 3$ is odd and $q^{\frac{d-1}{2}+\frac{1}{k-1}}=o(|\E|)$, then it follows from Theorem \ref{coco872016} that \[q^{\frac{d+1}{2}}(\Lambda_{k-1}(\E))^{1/2}(\Lambda_{k+1}(\E))^{1/2}=o(|\E|^k).\]

In other words, Theorem \ref{thm1} follows immediately from Theorem \ref{thmchinh2}.

\end{proof}

\section{Proof of Theorem \ref{thm33}}
To prove Theorem \ref{thm33}, we need to construct  a new Cayley graph as follows.

 Let $P(\mathbf{x})=\sum\limits_{j=1}^d a_jx_j^s \in \mathbb{F}_q[x_1,\dots,x_d]$ with $s\geq 2, a_j\neq 0$ for all $j=1,\dots,d$, and  \[P'(x_1,\ldots, x_{2d})=P(x_1, \ldots, x_d)-P(x_{d+1}, \ldots, x_{2d})\in \mathbb{F}_q[x_1, \ldots, x_{2d}].\] We define the graph $C_{P'}(\mathbb{F}_q^{2d+1})$ to be the Cayley graph with $H=\mathbb{F}_q\times \mathbb{F}_q^{2d}$ and $S=\lbrace (x_0,\mathbf{x})\in \mathbb{F}_q\times \mathbb{F}_q^{2d}~|~x_0+P'(\mathbf{x})=0\rbrace$, i.e. \[E(C_{P'}(\mathbb{F}_q^{2d+1}))=\left\lbrace\overrightarrow{((x_0,\mathbf{x}), (y_0,\mathbf{y}))}\in H\times H \colon y_0-x_0+P'(\mathbf{y}-\mathbf{x})=0\right\rbrace.\]

The $(n, d, \lambda)$ form of $C_{P'}(\mathbb{F}_q^{2d+1})$ was studied in \cite{vinherdos}.
\begin{lemma}[\cite{vinherdos}]\label{bodechinh}
For any odd prime power $q$, $d\ge 1$, then $C_{P'}(\mathbb{F}_q^{2d+1})$ is a 
  \[(q^{2d+1}, q^{2d}, q^{d})-\mbox{digraph}.\]
\end{lemma}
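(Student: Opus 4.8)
The plan is to realize $C_{P'}(\mathbb{F}_q^{2d+1})$ as a Cayley digraph on the abelian group $H=\mathbb{F}_q\times\mathbb{F}_q^{2d}$ and read off its three parameters directly. First I would record the two easy ones: since $|H|=q\cdot q^{2d}=q^{2d+1}$, the graph has $n=q^{2d+1}$ vertices, and since the connection set $S=\{(x_0,\mathbf{x})\colon x_0=-P'(\mathbf{x})\}$ contains exactly one element for each $\mathbf{x}\in\mathbb{F}_q^{2d}$, every vertex has in- and out-degree $|S|=q^{2d}$. Normality is automatic for a Cayley digraph on an abelian group, exactly as already observed for $C_\V$ in Section~4 (the swap $(s,s')\mapsto(s',s)$ on $S\times S$ gives a bijection between representations of $t$ and of $-t$ as differences, so $|N^+(u,v)|=|N^-(u,v)|$ for all $u,v\in H$). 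It therefore remains only to bound $\lambda(G)$.

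Following the same character computation used for $C_\V$, the eigenvalues of $C_{P'}$ are indexed by the characters $\chi_{(m_0,\mathbf{m})}(x_0,\mathbf{x})=\chi(m_0x_0+\mathbf{m}\cdot\mathbf{x})$ of $H$ and equal
\[\lambda_{(m_0,\mathbf{m})}=\sum_{(x_0,\mathbf{x})\in S}\chi(m_0x_0+\mathbf{m}\cdot\mathbf{x})=\sum_{\mathbf{x}\in\mathbb{F}_q^{2d}}\chi\bigl(-m_0P'(\mathbf{x})+\mathbf{m}\cdot\mathbf{x}\bigr).\]
I would then split into cases. For $(m_0,\mathbf{m})=(0,\mathbf{0})$ the sum is $|S|=q^{2d}$, the trivial (largest) eigenvalue. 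For $m_0=0$ but $\mathbf{m}\neq\mathbf{0}$ the sum reduces to $\sum_{\mathbf{x}}\chi(\mathbf{m}\cdot\mathbf{x})=0$ by orthogonality of characters, which is harmless. The substance lies entirely in the case $m_0\neq0$.

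In that case, writing $\mathbf{x}=(x_1,\ldots,x_{2d})$ and using $P'(\mathbf{x})=\sum_{j=1}^da_jx_j^s-\sum_{j=1}^da_jx_{d+j}^s$, the exponent splits into one-variable pieces and the sum factors completely:
\[\lambda_{(m_0,\mathbf{m})}=\prod_{j=1}^{d}\Bigl(\sum_{x\in\mathbb{F}_q}\chi(-m_0a_jx^s+m_jx)\Bigr)\prod_{j=1}^{d}\Bigl(\sum_{x\in\mathbb{F}_q}\chi(m_0a_jx^s+m_{d+j}x)\Bigr).\]
Each of the $2d$ factors is a one-variable Weil sum with leading coefficient $\pm m_0a_j\neq0$, hence bounded in absolute value by $(s-1)\sqrt{q}$; multiplying gives $|\lambda_{(m_0,\mathbf{m})}|\le(s-1)^{2d}q^{d}\ll q^{d}$ (and exactly $q^d$ when $s=2$, where each factor is a Gauss sum of modulus $\sqrt{q}$). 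Combining the three cases yields $\lambda(G)\ll q^d$. The main point to get right is this last case: one must verify the complete factorization into one-dimensional sums and then invoke Weil's theorem, taking care that its hypotheses hold --- this is precisely where the standing assumptions $a_j\neq0$ (together with $m_0\neq0$) are used, and where one should note that the clean exponent $q^d$ absorbs the constant $(s-1)^{2d}$ depending only on the fixed parameters $s$ and $d$.
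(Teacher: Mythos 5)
A preliminary remark on the comparison itself: the paper contains no proof of Lemma \ref{bodechinh} at all --- it is quoted from \cite{vinherdos} --- so your proposal can only be measured against the standard argument, which is precisely the route you take. Your bookkeeping is correct: $|H|=q^{2d+1}$ vertices; in- and out-degree $|S|=q^{2d}$ because $S$ is the graph of $\mathbf{x}\mapsto-P'(\mathbf{x})$; normality because Cayley digraphs over abelian groups are normal; the identification of the spectrum with the sums $\sum_{(x_0,\mathbf{x})\in S}\chi(m_0x_0+\mathbf{m}\cdot\mathbf{x})$; the value $q^{2d}$ at $(0,\mathbf{0})$; the vanishing when $m_0=0$, $\mathbf{m}\ne\mathbf{0}$; and the complete factorization into $2d$ one-variable sums when $m_0\ne 0$.

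The gap is in the last step, the appeal to Weil's theorem. Weil's bound $\left|\sum_{x\in\mathbb{F}_q}\chi(f(x))\right|\le(\deg f-1)\sqrt{q}$ requires $\gcd(\deg f,q)=1$, i.e.\ that the characteristic $p$ does not divide $s$; a nonzero leading coefficient is not sufficient. So your closing assertion that the hypotheses of Weil's theorem are secured by $a_j\ne 0$ and $m_0\ne 0$ is exactly the point where the argument breaks, and the failure is not cosmetic, because the statement as given (any odd prime power $q$, any $s\ge 2$) is false when $s$ is a power of $p$. Indeed, if $s=p^v$ then $\mathrm{Tr}(cx^{s})=\mathrm{Tr}(c^{1/p^v}x)$, so each of your $2d$ factors equals either $0$ or $q$, and eigenvalues of modulus $q^{2d}$ occur at nonzero frequencies $(m_0,\mathbf{m})$. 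For the most concrete instance take $q=s=p$ (permitted by the hypotheses): then $x^s=x$, $P'$ is a linear form, $S$ is a subgroup of $H$ of order $q^{2d}$, and all edges of $C_{P'}$ stay inside cosets of $S$; hence $e(S,S)=|S|^2=q^{4d}$, whereas Lemma \ref{edge2} applied to a genuine $(q^{2d+1},q^{2d},q^{d})$-digraph would force $e(S,S)\le q^{4d-1}+q^{3d}<q^{4d}$. The repair is to add the hypothesis $p\nmid s$ (implicit in this circle of results), under which your argument is complete; in the intermediate case $p\mid s$ but $s$ not a power of $p$, the bound can still be saved by first lowering the degree through the identity $\mathrm{Tr}(cx^{ps'})=\mathrm{Tr}(c^{1/p}x^{s'})$ and then applying Weil to the reduced polynomial, whose degree is coprime to $q$.

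A smaller point: even where Weil applies, you obtain $|\lambda_{(m_0,\mathbf{m})}|\le(s-1)^{2d}q^{d}$, i.e.\ $\lambda(G)\ll q^{d}$ rather than $\lambda(G)\le q^{d}$; the clean value $q^{d}$ is attained only for $s=2$, where the factors are Gauss sums of modulus exactly $\sqrt{q}$. You acknowledge this, and it is harmless for every application in the paper (all of which are asymptotic), but strictly speaking what you prove is the $O(q^{d})$ form of the lemma, not its stated form.
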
 

For $\E\subseteq\mathbb{F}_q^d$ and $X\subseteq\mathbb{F}_q$,  define
\[\nu_{P, k}(t)=\left\vert \{(a, \mathbf{x}_1, \ldots, \mathbf{x}_k)\in X\times \E^k\colon a+P(\mathbf{x}_1+\cdots+\mathbf{x}_k)=t\}\right\vert.\]

Our next lemmas are the main steps in the proof of Theorem \ref{thm33}.
\begin{lemma}\label{3171}
For $\E\subseteq \mathbb{F}_q^d$ and $k\ge 2$ even, we have the following estimate
\[\sum_{t\in \mathbb{F}_q}\nu_{P, k}(t)^2\le \frac{|\E|^{2k}|X|^2}{q}+q^{d}|X|\Lambda_{k}(\E)^2.\]
\end{lemma}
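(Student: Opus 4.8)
The plan is to move to the Fourier side over $\mathbb{F}_q$ and reduce the whole estimate to a single exponential-sum bound, which will be supplied by the spectral gap of $C_{P'}(\mathbb{F}_q^{2d+1})$ recorded in Lemma \ref{bodechinh}. Write $k=2m$, let $r_m(\mathbf{u})$ be the number of representations of $\mathbf{u}$ as a sum of $m$ elements of $\E$, and let $R_k=r_m*r_m$ be the $k$-fold representation function, so that $R_k(\mathbf{u})$ counts $(\mathbf{x}_1,\dots,\mathbf{x}_k)\in\E^k$ with $\mathbf{x}_1+\cdots+\mathbf{x}_k=\mathbf{u}$ and $\sum_{\mathbf{u}}r_m(\mathbf{u})^2=\Lambda_k(\E)$. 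Put $T(\psi)\assign\sum_{\mathbf{u}}R_k(\mathbf{u})\chi(\psi P(\mathbf{u}))$. Using $\mathbf{1}[z=0]=q^{-1}\sum_{\psi\in\mathbb{F}_q}\chi(\psi z)$ to expand the constraint defining $\nu_{P,k}$, one gets the exact identity
\[\sum_{t\in\mathbb{F}_q}\nu_{P,k}(t)^2=\frac1q\sum_{\psi\in\mathbb{F}_q}\Big|\sum_{a\in X}\chi(\psi a)\Big|^2\,|T(\psi)|^2 ,\]
whose $\psi=0$ term contributes exactly $|X|^2|\E|^{2k}/q$, the stated main term.

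For the remaining terms I would factor out the largest frequency of $T$ and use Parseval on $\mathbb{F}_q$: since $\sum_{\psi\neq 0}\big|\sum_{a\in X}\chi(\psi a)\big|^2\le q|X|$, the off-origin contribution is at most $|X|\cdot\max_{\psi\neq0}|T(\psi)|^2$, so everything reduces to proving $\max_{\psi\neq0}|T(\psi)|^2\le q^{d}\Lambda_k(\E)^2$. It is worth noting that the naive route — reading $\sum_t\nu_{P,k}(t)^2$ as an edge count in $C_{P'}$ between the multisets $\{(-a,-\mathbf{u},\mathbf{0})\}$ and $\{(-a',\mathbf{0},\mathbf{v})\}$ and applying the multiplicity expander mixing lemma (Lemma \ref{edge-c-sesb}) — yields only the weaker error $q^{d}|X|\Lambda_{2k}(\E)$; since $\Lambda_{2k}\ge\Lambda_k^2$, this is insufficient, and the sharpening from $\Lambda_{2k}$ to $\Lambda_k^2$ is precisely the content of the lemma.

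The key step, and the main obstacle, is the bound on $\max_{\psi\ne0}|T(\psi)|^2$. Expanding the square gives
\[|T(\psi)|^2=\sum_{(\mathbf{u},\mathbf{v})\in\mathbb{F}_q^{2d}}R_k(\mathbf{u})R_k(\mathbf{v})\,\chi\big(\psi\,P'(\mathbf{u},\mathbf{v})\big),\]
using $P(\mathbf{u})-P(\mathbf{v})=P'(\mathbf{u},\mathbf{v})$; this is a $P'$-twisted character sum against the tensor weight $G(\mathbf{u},\mathbf{v})=R_k(\mathbf{u})R_k(\mathbf{v})$. Expanding $G$ by Fourier inversion on $\mathbb{F}_q^{2d}$ and interchanging sums, each inner sum $\sum_{\mathbf{w}}\chi\big(\psi P'(\mathbf{w})+\mathbf{m}\cdot\mathbf{w}\big)$ is exactly an eigenvalue of $C_{P'}$, which for $\psi\ne0$ is nonprincipal and hence bounded by $q^{d}$ in modulus by Lemma \ref{bodechinh}. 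This gives $|T(\psi)|^2\le q^{-d}\|\widehat G\|_1=q^{-d}\|\widehat{R_k}\|_1^{\,2}$. The decisive point is that $R_k=r_m*r_m$ forces $\widehat{R_k}=\widehat{r_m}^{\,2}$, so Parseval evaluates the otherwise-awkward $L^1$ norm exactly: $\|\widehat{R_k}\|_1=\sum_{\mathbf{m}}|\widehat{r_m}(\mathbf{m})|^2=q^{d}\sum_{\mathbf{u}}r_m(\mathbf{u})^2=q^{d}\Lambda_k(\E)$. Hence $|T(\psi)|^2\le q^{-d}\big(q^{d}\Lambda_k\big)^2=q^{d}\Lambda_k^2$, as needed.

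Combining the three displays yields $\sum_t\nu_{P,k}(t)^2\le |X|^2|\E|^{2k}/q+q^{d}|X|\Lambda_k(\E)^2$. The only genuinely delicate feature is the replacement of the crude Cauchy–Schwarz estimate $\|\widehat{R_k}\|_1\le q^{d}\sqrt{\Lambda_{2k}}$ (which is what the generic expander mixing lemma amounts to here) by the exact Parseval evaluation $\|\widehat{R_k}\|_1=q^{d}\Lambda_k$; it is exactly this use of the product structure of the weight $G$ that converts $\Lambda_{2k}$ into $\Lambda_k^2$ and makes the bound strong enough for Theorem \ref{thm33}.
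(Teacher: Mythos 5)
Your argument is correct: the orthogonality identity for $\sum_t\nu_{P,k}(t)^2$, the Parseval bound $\sum_{\psi\ne 0}\bigl\vert\sum_{a\in X}\chi(\psi a)\bigr\vert^2\le q\vert X\vert$, the identification of $\sum_{\mathbf{w}}\chi\bigl(\psi P'(\mathbf{w})+\mathbf{m}\cdot\mathbf{w}\bigr)$ with nonprincipal eigenvalues of $C_{P'}(\mathbb{F}_q^{2d+1})$ covered by Lemma \ref{bodechinh}, and the evaluation $\|\widehat{R_k}\|_1=q^{d}\Lambda_k(\E)$ via $\widehat{R_k}=\widehat{r_m}^{\,2}$ and Parseval all check out, and they assemble to exactly the claimed inequality. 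However, your route is not the paper's, and one of your side claims about the paper's type of route is wrong. The paper does use the multiplicity mixing lemma (Lemma \ref{edge-c-sesb}), but with a smarter encoding than the ``naive'' one you dismiss: it takes $\A=\{(a,\,-\mathbf{x}_1-\cdots-\mathbf{x}_{k/2},\,-\mathbf{y}_1-\cdots-\mathbf{y}_{k/2})\}$ and $\B=\{(b,\,\mathbf{x}_{k/2+1}+\cdots+\mathbf{x}_{k},\,\mathbf{y}_{k/2+1}+\cdots+\mathbf{y}_{k})\}$, i.e.\ it splits each of the two $k$-fold sums into halves distributed over the two $\mathbb{F}_q^d$ coordinates of $\mathbb{F}_q^{2d+1}$. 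With this choice the multiplicities factor as $r_{k/2}(\mathbf{U})\,r_{k/2}(\mathbf{V})$, so $\sum m_\A^2=\sum m_\B^2=\vert X\vert\Lambda_k(\E)^2$, the edge count from $\A$ to $\B$ is exactly $\sum_t\nu_{P,k}(t)^2$, and Lemmas \ref{edge-c-sesb} and \ref{bodechinh} give the stated bound in one line. So the mixing lemma is not limited to the error $q^{d}\vert X\vert\Lambda_{2k}(\E)$; that weakness belongs only to your particular encoding $\{(-a,-\mathbf{u},\mathbf{0})\}$, $\{(-a',\mathbf{0},\mathbf{v})\}$. In substance your proof and the paper's are the same computation: both rest on the same spectral input and both convert $\Lambda_{2k}$ into $\Lambda_k^2$ by exploiting a tensor-product structure --- the paper on the multiset side (multiplicities factor), you on the Fourier side ($\widehat{R_k}=\widehat{r_m}^{\,2}$). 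What your version buys is self-containedness, since you never invoke Lemma \ref{edge-c-sesb}, whose proof the paper leaves to the reader; what the paper's version buys is brevity, and the (correct) observation that the black-box mixing lemma already suffices once the multisets are chosen well.
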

\begin{proof}
Let $\A$ and $\B$ be multi-sets defined by:
\[ \A:=\{(a, -\mathbf{x}_1-\cdots-\mathbf{x}_{k/2}, -\mathbf{y}_1-\cdots-\mathbf{y}_{k/2})\colon a\in X, \mathbf{x}_i, \mathbf{y}_i\in \E\},\]
and 
\[ \B:=\{(b, \mathbf{x}_{k/2+1}+\cdots+\mathbf{x}_{k}, \mathbf{y}_{k/2+1}+\cdots+\mathbf{y}_{k/2+1})\colon b\in X, \mathbf{x}_i, \mathbf{y}_i\in \E\}.\]

One can check that 
\[\sum_{\mathbf{x}\in \A}m_\A(\mathbf{x})^2=|X|\Lambda_k(\E)^2, ~\sum_{\mathbf{x}\in \B}m_\B(\mathbf{x})^2=|X|\Lambda_k(\E)^2, ~ |\A|=|\B|=|X||\E|^k.\]
On the other hand, it is clear that $\sum_{t\in \mathbb{F}_q}\nu_{P, k}^2$ is equal to the number of edges from $\A$ to $\B$ in the graph $C_{P'}(\mathbb{F}_q^{2d+1})$. Thus it follows from Lemma \ref{edge-c-sesb} and Theorem \ref{bodechinh} that 
 \[\sum_{t\in \mathbb{F}_q}\nu_{P, k}(t)^2\le \frac{|\E|^{2k}|X|^2}{q}+q^{d}|X|\Lambda_{k}(\E)^2.\]
 This ends the proof of the lemma.
\end{proof}

By employing the same techniques, we get a similar result for the case $k\ge 3$ odd.
\begin{lemma}\label{3172}
For $\E\subseteq \mathbb{F}_q^d$ and $k\ge 3$ odd, we have the following estimate
\[\sum_{t\in \mathbb{F}_q}\nu_{P, k}(t)^2\le \frac{|\E|^{2k}|X|^2}{q}+q^{d}|X|\Lambda_{k-1}(\E)\Lambda_{k+1}(\E).\]
\end{lemma}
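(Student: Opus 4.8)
The plan is to imitate the proof of Lemma \ref{3171} line for line, the only change being that the $k$-fold sums must be split \emph{asymmetrically} because $k$ is odd. Concretely, I would write each summation $\mathbf{x}_1+\cdots+\mathbf{x}_k$ as a block of $(k-1)/2$ terms followed by a block of $(k+1)/2$ terms, and likewise for the $\mathbf{y}$-variables, and then define the multi-sets
\[\A:=\left\{(a, -\mathbf{x}_1-\cdots-\mathbf{x}_{(k-1)/2}, -\mathbf{y}_1-\cdots-\mathbf{y}_{(k-1)/2})\colon a\in X,\ \mathbf{x}_i,\mathbf{y}_i\in \E\right\},\]
\[\B:=\left\{(b, \mathbf{x}_{(k+1)/2}+\cdots+\mathbf{x}_{k}, \mathbf{y}_{(k+1)/2}+\cdots+\mathbf{y}_{k})\colon b\in X,\ \mathbf{x}_i,\mathbf{y}_i\in \E\right\}.\]
Note that since $k$ is odd, both $k-1$ and $k+1$ are even, so $\Lambda_{k-1}(\E)$ and $\Lambda_{k+1}(\E)$ are well defined.

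Next I would identify $\sum_{t\in\mathbb{F}_q}\nu_{P,k}(t)^2$ as an edge count. Expanding the square, this sum counts tuples $(a,b,\mathbf{x}_1,\ldots,\mathbf{x}_k,\mathbf{y}_1,\ldots,\mathbf{y}_k)\in X\times X\times \E^{2k}$ with $a+P(\mathbf{x}_1+\cdots+\mathbf{x}_k)=b+P(\mathbf{y}_1+\cdots+\mathbf{y}_k)$. For $\mathbf{u}\in\A$ and $\mathbf{v}\in\B$, the difference in the $\mathbb{F}_q^{2d}$-coordinates is exactly $(\mathbf{x}_1+\cdots+\mathbf{x}_k,\ \mathbf{y}_1+\cdots+\mathbf{y}_k)$, whose $P'$-value is $P(\sum_i\mathbf{x}_i)-P(\sum_i\mathbf{y}_i)$; hence the edge condition $v_0-u_0+P'(\mathbf{v}-\mathbf{u})=0$ of $C_{P'}(\mathbb{F}_q^{2d+1})$ is precisely the displayed equation after the harmless relabeling $a\leftrightarrow b$. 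Thus $\sum_{t}\nu_{P,k}(t)^2=e(\A,\B)$, and I would then apply the multiplicity version, Lemma \ref{edge-c-sesb}, together with the parameters $(q^{2d+1},q^{2d},q^{d})$ supplied by Lemma \ref{bodechinh}.

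It remains to plug in the four quantities. The generating tuples give $|\A|=|X||\E|^{k-1}$ and $|\B|=|X||\E|^{k+1}$, so the main term is $\frac{d}{n}|\A||\B|=\frac{q^{2d}}{q^{2d+1}}|X|^2|\E|^{2k}=\frac{|\E|^{2k}|X|^2}{q}$. Because the scalar coordinate $a$ (resp. $b$) is an independent label, the self-multiplicity function $m_\A$ factors as a product over the two $\mathbb{F}_q^d$-blocks, each of which contributes the additive energy $\Lambda_{k-1}(\E)$ of $(k-1)/2$-fold sums; hence $\sum_{\mathbf{a}\in\A}m_\A(\mathbf{a})^2=|X|\Lambda_{k-1}(\E)^2$, and symmetrically $\sum_{\mathbf{b}\in\B}m_\B(\mathbf{b})^2=|X|\Lambda_{k+1}(\E)^2$. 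The error term of Lemma \ref{edge-c-sesb} is therefore $q^{d}\sqrt{|X|}\,\Lambda_{k-1}(\E)\cdot\sqrt{|X|}\,\Lambda_{k+1}(\E)=q^{d}|X|\Lambda_{k-1}(\E)\Lambda_{k+1}(\E)$, which combines with the main term to give exactly the claimed bound.

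I expect the only real point requiring care to be the multiplicity bookkeeping: one must recognize that, because of the extra scalar coordinate, each of $\A$ and $\B$ splits into two independent $\mathbb{F}_q^d$-blocks so that each block contributes a full energy factor, producing $\Lambda_{k-1}(\E)^2$ and $\Lambda_{k+1}(\E)^2$ rather than first powers. The asymmetry of the partition $((k-1)/2$ against $(k+1)/2)$ is precisely what replaces the single $\Lambda_k(\E)^2$ of the even case by the product $\Lambda_{k-1}(\E)\Lambda_{k+1}(\E)$; everything else is the verbatim transcription of the proof of Lemma \ref{3171}.
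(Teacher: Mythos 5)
Your proposal is correct and is precisely the argument the paper intends: the paper omits the proof of this lemma, saying only that it follows "by employing the same techniques" as Lemma \ref{3171}, and your asymmetric split into $(k-1)/2$- and $(k+1)/2$-fold blocks, fed into the Cayley graph $C_{P'}(\mathbb{F}_q^{2d+1})$ of Lemma \ref{bodechinh} via the multiset mixing bound of Lemma \ref{edge-c-sesb}, is exactly that adaptation. Your bookkeeping is also right: $|\A||\B|=|X|^2|\E|^{2k}$, $\sum_{\mathbf{a}\in\A}m_\A(\mathbf{a})^2=|X|\Lambda_{k-1}(\E)^2$, and $\sum_{\mathbf{b}\in\B}m_\B(\mathbf{b})^2=|X|\Lambda_{k+1}(\E)^2$, which yield the stated main and error terms.
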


We are now ready to prove Theorem \ref{thm33}.
\begin{proof}[Proof of Theorem \ref{thm33}]
It follows from the proof of Theorem $2.6$ in \cite{vinherdos} that 
\[|X+\Delta_{k, P}(\E)|\gg \frac{|X|^2|\E|^{2k}}{\sum_{t\in \mathbb{F}_q}\nu_{P, k}(t)^2}.\]

Therefore from Lemma \ref{3171} and Lemma \ref{3172}, we get two following cases:

\begin{enumerate}
\item If $k\ge 2$ is even, we obtain
\[|X+\Delta_{k, P}(\E)|\gg \min\left\lbrace  \frac{|X||\E|^{2k}}{q^d\Lambda_k(\E)^2}, q\right\rbrace.\]

\item If $k\ge 3$ is odd, we obtain
\[|X+\Delta_{k, P}(\E)|\gg \min\left\lbrace  \frac{|X||\E|^{2k}}{q^d\Lambda_k(\E)\Lambda_{k-1}(\E)}, q\right\rbrace.\]

\end{enumerate}

Thus Theorem \ref{thm33} follows immediately from Theorem \ref{coco872016}, which concludes the proof of the theorem.
\end{proof}

\section*{Acknowledgements.}
\thispagestyle{empty}
The first author was partially supported by Swiss National Science Foundation grants 200020-162884 and 200020-144531.

\vspace{1cc}
\hfill\\
Department of Mathematics,\\
EPF Lausanne\\
Switzerland\\
E-mail: thang.pham@epfl.ch\\
\bigskip
\hfill\\
Institute of Mathematics, \\
Vietnam Academy of Science and Technology\\  
E-mail: ddhieu@math.ac.vn

\end{document}